\newcommand \eps {\varepsilon}
\newcommand \C {\mathbb C}
\newcommand \R {\mathbb R}
\newcommand \Z {\mathbb Z}
\newcommand \D {\mathbb D}
\newcommand \inv {^{-1}}
\newcommand \nin{\not\in}
\newcommand* \bb [1]{\left({#1}\right)}
\newcommand* \bset[1] {\left\{{#1}\right\}}
\newcommand* \indic [1]{\mathbf 1_{#1}}
\newcommand* \norm[2]{\left|\left|{#1}\right|\right|_{#2}}
\newcommand* \limit [2]{\underset{{#1}\rightarrow{#2}}{\lim}\;}
\newcommand* \bunion[3]{\bigcup_{{#1} = {#2}}^{#3}}
\newcommand* \bintersect[3]{\bigcap_{{#1} = {#2}}^{#3}}
\newcommand* \sumit [3]{\underset{{#1} = {#2}}{\overset{#3}\sum}\;}
\newcommand* \limitsup [2]{\underset{{#1}\rightarrow{#2}}{\limsup}\;}
\newcommand* \abs[1] {\left|{#1}\right|}
\newtheorem{thm}{Theorem}[section]
\newtheorem{lem}[thm]{Lemma}
\newtheorem{claim}[thm]{Claim}
\newtheorem{defn}[thm]{Definition}
\newtheorem{rmk}[thm]{Remark}
\newtheorem{quest}[thm]{Question}
\newtheorem*{thm*}{Theorem}
\newtheorem*{lem*}{Lemma}
\newtheorem*{claim*}{Claim}
\newtheorem*{prop*}{Proposition}
\newtheorem*{cor*}{Corollary}
\newtheorem*{defn*}{Definition}
\newtheorem*{examp*}{Example}
\newtheorem*{rmk*}{Remark}
\newtheorem*{remind*}{Reminder}
\newtheorem*{quest*}{Question}
\begin{document}
\title{Measurably entire functions and their growth}
\author{Adi Gl\"ucksam\thanks{Supported in part by ERC Advanced Grant~692616 and ISF Grant~382/15.}}
\maketitle

\begin{abstract}
In 1997 B. Weiss introduced the notion of measurably entire functions and proved that they exist on every arbitrary free $\C$-action defined on a standard probability space. In the same paper he asked about the minimal possible growth rate of such functions. In this work we show that for every arbitrary free $\C$-action defined on a standard probability space there exists a measurably entire function whose growth rate does not exceed $\exp\bb{\exp [\log^p |z|]}$ for any $p>3$. This complements a recent result by Buhovski, Gl\"ucksam, Logunov, and Sodin who showed that such functions cannot have a growth rate smaller than $\exp\bb{\exp [\log^p |z|]}$ for any $p<2$.
\end{abstract}

\section{Introduction}
A measure space $(X,\mathcal B,\mu)$ is called a \underline{standard probability space} if $\mu(X)=1$ and there exists a topology $\tau$ such that $(X,\tau)$ is metrizable as a topological space, $\mathcal B$ is the completion of the $\sigma$-algebra generated by the open sets of $\tau$, and for every $\eps>0$ there exists a compact set $K$ such that $\mu(K)>1-\eps$.

Let $(X,\mathcal B,\mu)$ be a standard probability space. A map $f:X\rightarrow X$ is called \underline{probability} \underline{preserving} if for every $B\in\mathcal B$, $\mu(B)=\mu(f\inv(B))$. We denote by $PPT(X)$ the group of all invertible probability preserving transformations from $(X,\mathcal B,\mu)$ to itself. We use the standard topology on this group, defined by the pull back of the weak operator topology restricted to unitary operators on $L_2(X,\mathcal B,\mu)$ by the Koopman representation associated with the action,  $T\mapsto U_Tf$, where $[U_Tf](x)=f(Tx)$ (see \cite[Page~61]{Halmos1960}).

A \underline{probability preserving action of $\C$} (a $\C$-action in short) is a continuous homomorphism $T:\C\rightarrow PPT(X)$. A $\C$-action $T:\C\rightarrow PPT(X)$ is called \underline{free} if for $\mu$-almost every $x\in X$, $T_zx=x$ implies that $z=0$. In other words, there are no periodic points almost surely.

Let $\mathcal E$ denote the space of entire functions endowed with the local uniform topology, and let $\mathcal B$ denote the Borel structure associated with it. The complex plane acts on $(\mathcal E,\mathcal B)$ by translations defined by $\bb{T_w f}(z)=f(z+w)$.

Whether there exists a probability measure $\lambda$ defined on $(\mathcal E,\mathcal B)$ such that $T$ is a $\C$-action on $(\mathcal E,\mathcal B,\lambda)$ is not a trivial fact. In fact, it was not known until Weiss showed such measures exist using notions from dynamical systems, which we shall introduce now:
\begin{defn}
Let $(X,\mathcal B,\mu)$ be a standard probability space, and suppose $T:\C\rightarrow PPT(X)$ is  a $\C$-action. A map $F:X\rightarrow\C$ is called \underline{measurably entire} if it is a non-constant measurable function and for $\mu$-almost every $x\in X$ the map $F_x:\C\rightarrow\C$ defined by $F_x(z):=F(T_zx)$ is entire.
\end{defn}
The existence of measurably entire functions is closely related to the question of existence of translation invariant random entire functions. On one hand, the space of entire functions, $\mathcal E$, endowed with the topology of local uniform convergence is a Polish space, and so the existence of a translation invariant probability measure on $\mathcal E$ is an example of a measurably entire function. On the other hand, the existence of a measurably entire function produces a translation invariant random entire function by defining the measure
$$
\mu_F(A):=\mu\bb{\bset{x\in X;\; F_x\in A}},\; A\subset\mathcal E ,\text {measurable}.
$$

Some years ago Mackey asked the following question:
\begin{quest}[Mackey]
Does every probability preserving free action of $\C$ on a standard probability space admits a measurably entire function?
\end{quest}
Weiss answered Mackey's question in 1997:
\begin{thm}[Weiss 1997,\;\cite{Weiss1996}]
For every free probability preserving action of $\C$ on a standard probability space there exists a measurably entire function.
\end{thm}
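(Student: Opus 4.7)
My plan is to build $F$ as the locally uniform limit, on almost every orbit, of polynomial interpolants defined on a nested sequence of Rokhlin towers that exhaust $(X,\mu)$.

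The first ingredient is the Ornstein--Weiss Rokhlin lemma for the amenable group $\C$: for every $R>0$ and $\eps>0$ there is a Borel base $B\subset X$ such that the map $(z,b)\mapsto T_z b$ is injective on $\{|z|\le R\}\times B$ (this is where freeness enters) and its image covers $X$ up to measure $\eps$. I would pick radii $R_n\nearrow\infty$ fast and errors $\eps_n$ with $\sum\eps_n<\infty$, obtaining bases $B_n$ whose towers $E_n:=T_{\{|z|\le R_n\}}B_n$ miss only $\eps_n$-mass; by Borel--Cantelli, $\mu$-a.e.\ $x$ lies in $E_n$ eventually, and each such $x$ has a unique representation $x=T_{z_n(x)}b_n(x)$.

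The second ingredient is a measurable interpolation scheme. Fix a finite set of nodes $\Lambda_n\subset\{|z|\le R_n\}$ of cardinality $N_n$ and a bounded Borel function $\xi:X\to\C$; for each $b\in B_n$, let $P_{n,b}$ be the polynomial of degree $N_n-1$ produced by Lagrange interpolation from the prescribed values $\{\xi(T_\lambda b):\lambda\in\Lambda_n\}$. Setting $F_n(T_zb):=P_{n,b}(z)$ on $E_n$ and $0$ elsewhere produces a Borel measurable function whose restriction to the portion of any orbit lying inside the $n$-th tower is a shifted polynomial in the orbit parameter, hence extends entirely in $z$ on that part of the orbit.

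The third and crucial step is to arrange a \emph{relative} Rokhlin construction so that $B_{n+1}$ refines $B_n$ and the nodes $\Lambda_n$ are inherited by the $(n{+}1)$st scale; then $P_{n,b_n(x)}$ and $P_{n+1,b_{n+1}(x)}$ agree at those nodes up to a controllable perturbation. A Bernstein/three-circles estimate then shows that $z\mapsto P_{n,b_n(x)}(z_n(x)+z)$ is Cauchy locally uniformly on $\C$ for $\mu$-a.e.\ $x$ and converges to an entire $F_x$; I would set $F(x):=F_x(0)$, deduce measurability from that of the $F_n$'s, and verify $F(T_wx)=F_x(w)$ almost surely. Non-constancy is secured by choosing $\xi$ with enough measurable variation at the base. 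The main obstacle is precisely this inductive coherence: the Ornstein--Weiss lemma delivers towers at each scale but does not automatically nest them, so one must either iterate the lemma relatively, building $B_{n+1}$ inside an appropriate rescaling of $B_n$, or accept a perturbation at each stage and prove that the telescoping series $F_1+\sum_n(F_{n+1}-F_n)$ still converges locally uniformly to an entire function on almost every orbit. Once this coherence is in place, the remaining pieces---polynomial-interpolation bounds, Borel--Cantelli, and Borel measurability of the limit---are routine.
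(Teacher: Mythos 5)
Your outline gets the skeleton right (nested Rokhlin towers exhausting $X$, inductively defined functions along each tower, locally uniform convergence along orbits, nesting addressed by a relative Rokhlin construction---this last point is precisely what the paper's Nested Towers Lemma, Lemma~\ref{lem:Nested}, via Lind's flow version of Rokhlin's lemma, provides), but the central analytic step is wrong. You propose to set $F_n(T_zb)=P_{n,b}(z)$ where $P_{n,b}$ is the Lagrange interpolant of degree $N_n-1$ to the values $\xi(T_\lambda b)$, $\lambda\in\Lambda_n$, of a \emph{fixed} bounded Borel function $\xi$. For $\mu$-a.e.\ orbit the restriction of $\xi$ is just a bounded, highly irregular function, and interpolating such data at ever finer/larger node sets does not produce a Cauchy sequence of polynomials: the Lebesgue constants blow up and $P_{n,b}$, $P_{n+1,b}$ share only nodal agreement, not closeness in sup norm on any fixed disc. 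The Bernstein/three-circles step you invoke requires a priori sup bounds on $P_{n,b}$ on a larger region, which Lagrange interpolation of rough data simply does not give; absent those bounds, the telescoping series $F_1+\sum_n(F_{n+1}-F_n)$ has no reason to converge locally uniformly on any orbit.

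What is actually needed---and what the paper (reproving Weiss's theorem with growth control) does---is to make each $F_n$ approximate the \emph{previous} function $F_{n-1}$ uniformly on the bulk of each inner tile, not to re-interpolate an external $\xi$ at discrete nodes. The gluing is done by a smooth cutoff of the local holomorphic data on the sub-squares followed by H\"ormander's $L^2$ solution of $\bar\partial\alpha=\bar\partial g$ with a subharmonic weight $u$ (Lemmas~\ref{lem:PrisonBreak}--\ref{lem:EntireExtension}); the weight is engineered so that $u$ is large on the cutoff strips and small inside the tiles, forcing the correction $\alpha$ to be exponentially small where one needs agreement and forcing controlled growth of $f=g-\alpha$ elsewhere. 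The resulting error $|F_n-F_{n-1}|<10^{-2n}$ on a large measurable set (property $(B_3)$ in Lemma~\ref{lem:SuperSequence}) is what makes the telescoping series converge locally uniformly a.e. One further ingredient you omit: because $F_{n-1}$ is only piecewise constant over the base $B_{n-1}$, one needs the $\delta$-fine partition (Lemma~\ref{lem:Partition}) to ensure that all base points in one cell of $B_n$ see an almost identical configuration of inner towers, so the entire function built from one representative also approximates the data of every other point in the cell. Replacing nodal interpolation by this $\bar\partial$-gluing-and-refinement scheme is the missing idea; the rest of your plan (Borel--Cantelli, measurability, non-constancy from a base function with variation) is indeed routine once that step is in place.
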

Weiss' paper gives rise to an abundance of measurably entire functions and in particular answers Mackey's question positively. In his paper Weiss raised several questions, one of them was about the possible growth rate of such functions, measured by the asymptotic growth of the function $M_f(R):=\underset{z\in\overline{R\D}}\max\;\abs{f(z)}$, where $\overline{R\D}:=\bset{\abs z\le R}$. There are two possible interpretations for this question:
\begin{enumerate}[label=(\roman*),leftmargin=0.7cm]
\item What is the minimal growth rate of a measurably entire function of a $\C$-action on a standard probability space $(X,\mathcal B,\mu)$?
\item Given a $\C$-action on a standard probability space $(X,\mathcal B,\mu)$, what is the minimal growth rate of a measurably entire function?
\end{enumerate}
We recently proved in a joint work with L. Buhovsky, A. Logunov, and M. Sodin the following theorem, which gives an almost full answer to the first interpretation. We state this theorem using the terminology of measurably entire functions, where $\log^\alpha x:=\bb{\log x}^\alpha$.
\begin{thm}\cite[Theorem~1]{Us2017}\label{thm:Us}
\begin{enumerate}[label=(\alph*),leftmargin=0.7cm]
\item There exists a standard probability space $(X,\mathcal B,\mu)$ with a free $\C$-action, $T$, for which there exists a measurably entire function $F$ such that for $\mu$ almost every $x\in X$, and for every $\eps>0$:
$$
\limitsup R\infty \frac{\log\log \underset{z\in \overline{R\D}}\max\;\abs{F(T_zx)}}{\log^{2+\eps} R}=0.
$$
\item For every standard probability space $(X,\mathcal B,\mu)$ for every measurably entire function $F:X\rightarrow\C$ $\mu$-almost every $x$, either $z\mapsto F(T_zx)$ is a constant function or for every $\eps>0$
$$\limit R\infty\frac{\log\log \underset{z\in \overline{R\D}}\max\;\abs{F(T_zx)}}{\log^{2-\eps} R}=\infty.$$
\end{enumerate}
\end{thm}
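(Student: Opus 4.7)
The plan is to construct the desired system directly on the Polish space $(\mathcal E,\mathcal B)$ equipped with the translation action $T_wf(z)=f(z+w)$, by producing a translation-invariant probability measure $\mu$ supported on entire functions satisfying $\log\log M_f(R)\le \log^{2+\eps}R$ for all sufficiently large $R$. Once such a $\mu$ is in hand, $F(f):=f(0)$ is a non-constant measurably entire function on $(\mathcal E,\mathcal B,\mu,T)$ with the required growth bound, giving the example asserted in (a). To obtain $\mu$, for each $N$ I would engineer a single deterministic entire function $f_N$ of controlled growth on a large disk $R_N\D$, form the empirical orbital measure $\mu_N=\frac{1}{\pi R_N^2}\int_{|w|\le R_N}\delta_{T_wf_N}\,dw$, and pass to a subsequential weak-$*$ limit. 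Tightness on $\mathcal E$ is guaranteed by the uniform growth bound on the $f_N$, and translation-invariance of any limit follows from the Fejer-type estimate $|(R_N\D)\symdif(R_N\D-v)|/(\pi R_N^2)\to 0$ for each fixed $v\in\C$. The building blocks $f_N$ would be of the form $f_N(z)=\prod_k E_{p_k}\!\left(\tfrac{z-w_k^{(N)}}{a_k^{(N)}}\right)$, with $\{w_k^{(N)}\}$ a super-exponentially sparse zero pattern and $\{a_k^{(N)}\}$ chosen so that the elementary Weierstrass factors contribute the prescribed doubly-logarithmic growth.

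\textbf{Part (b): lower bound.} For the lower bound the plan is proof by contradiction. Assume $F$ is measurably entire and that the set
$$
A=\bset{x\in X:\; F_x\text{ is non-constant and }\liminf_{R\to\infty}\tfrac{\log\log M_{F_x}(R)}{\log^{2-\eps}R}<\infty}
$$
has positive measure. By the ergodic decomposition it suffices to treat the case where the $\C$-action is ergodic and $\mu(A)=1$. The next step is to leverage the slow upper bound to obtain a two-sided estimate: by a Cartan-type $\log$-minimum estimate, for entire functions of growth $\log\log M_f(R)\le \log^{2-\eps}R$ the quantity $\log|f(\zeta)|$ is close to $\log M_f(R)$ off an exceptional set of small planar measure inside $R\D$. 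Combining this with the Jensen bound $n_{F_x}(R/2)\le C\log M_{F_x}(R)$, the stationary random field $Y_x(z):=\log|F(T_zx)|$ has only small sporadic dips on $\C$. A Wiener / Birkhoff averaging argument applied to $Y_x$ in large disks then forces its spatial averages to be asymptotically constant in $z$ almost surely, which combined with the non-constancy of $F_x$ on $A$ yields the contradiction: either $F_x$ has a non-empty zero set, contradicting the vanishing of the average of $\log|F_x|$, or else $F_x=e^{h_x}$ with $h_x$ entire of comparable growth, and the same argument applied to $h_x$ shows $h_x$ is constant.

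\textbf{Main obstacle.} The hardest point is matching the sharp exponent $2$ on both sides. For (a), the tension is that strict translation invariance of the zero process would by Jensen force quadratic density of zeros and hence growth $\ge\exp(cR^2)$; thus the $f_N$ must have very sparse zeros and nearly all of their growth has to come from carefully balanced Weierstrass corrections, and the arithmetic of these corrections determines the exponent $2+\eps$ and the loss $\eps$. For (b), sharpness hinges on a quantitative Cartan lower bound whose error is controlled by a capacity/area comparison in $\C$, the exponent $2$ reflecting the two-dimensional geometry of the plane; any looseness in the capacity estimate or in the ergodic averaging would push the threshold strictly below $\log^{2-\eps}$ and break the matching between the two parts.
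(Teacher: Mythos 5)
The paper does not prove Theorem~\ref{thm:Us}; it is quoted verbatim (with attribution) from \cite{Us2017} as motivation and as a benchmark for the weaker upper bound established here (Theorem~\ref{thm:UprLwrBnd}, which is for a general free $\C$-action but only gives the exponent $3+\eps$). There is therefore no in-paper proof against which to compare your sketch, and you should consult \cite{Us2017} directly for the actual argument.

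Judging your outline on its own terms, two things seem off. For part~(a), the central tension you identify rests on a misreading of the target scale: $\exp\bb{\exp\bb{\log^{2+\eps}R}}$ is \emph{far larger} than $\exp\bb{cR^2}$ (since $\exp\bb{\log^{2+\eps}R}\gg R^m$ for every $m$), so the quadratic zero density that translation invariance of the zero process forces via Jensen is perfectly compatible with the stated growth bound, and a sparse zero pattern is neither necessary nor particularly helpful; the difficulty lies elsewhere. Your Krylov--Bogolyubov scheme is a reasonable idea, but it does not address the standard failure mode of orbital averaging: the weak-$*$ limit of the measures $\mu_N$ can concentrate on a degenerate set (e.g.\ the constant functions), so some uniform non-degeneracy statement about the $f_N$ and their translates is needed before one can conclude. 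For part~(b), the closing reduction in the zero-free case (``write $F_x=e^{h_x}$ and apply the same argument to $h_x$'') does not go through: $h_x$ is not itself a stationary random entire function, since $\log F_x$ is only defined modulo $2\pi i\Z$ and the translation action contributes a branch-mixing cocycle, so the ergodic-averaging step cannot simply be repeated for $h_x$. The proof in \cite{Us2017} has to treat this case by a separate argument, and your outline does not yet contain the ingredient that circumvents it.
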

While Weiss' paper tells us such functions always exist, part (b) of Theorem \ref{thm:Us} gives a lower bound for the minimal possible growth rate of measurably entire functions defined for a general free $\C$-action defined on a standard probability space, but not an upper bound.

We would like to emphasize the difference between the two interpretations. While in the first interpretation one may choose the measure space (and therefore the action) as well as the measurably entire function, in the second one the action is given to us, and one may only choose the measurably entire function.

In this paper we will construct a measurably entire function with bounded growth rate for a general free action:
\begin{thm}\label{thm:UprLwrBnd}
Let $(X,\mathcal B,\mu)$ be a standard probability space, and suppose $T:\C\rightarrow PPT(X)$ is a free action. Then there exists a measurably entire function $F:X\rightarrow \C$ such that for $\mu$-almost every $x\in X$ for every $\eps>0$:
\begin{eqnarray}\label{eq:UprBnd}
\limit R\infty\frac{\log\log\underset{z\in \overline{R\D}}\max\;\abs{F(T_zx)}}{\log^{3+\eps}R}=0.
\end{eqnarray}
\end{thm}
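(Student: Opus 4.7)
The plan is to refine Weiss's original construction by carefully tracking growth at each step. We build $F$ as a measurable limit $F(x) = \sum_{n \ge 1} \Delta_n(x)$ of corrections attached to a sequence of geometric scales $R_n \to \infty$. At scale $n$, an Ornstein--Weiss-type Rokhlin lemma for the free $\C$-action provides a measurable set $B_n \subset X$ of measure at least $1 - 2^{-n}$ together with a measurable ``local position'' map $\zeta_n : B_n \to \C$ satisfying $\zeta_n(T_w x) = \zeta_n(x) + w$ whenever the orbit segment from $x$ to $T_w x$ remains inside a single scale-$R_n$ tile. The correction is then $\Delta_n(x) := \eta_n\,\psi_n(\zeta_n(x))$ for a small positive scalar $\eta_n$ and a scalar entire function $\psi_n : \C \to \C$ to be chosen.

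The delicate step is the construction of $\psi_n$. Because $\log |\psi_n|$ is subharmonic, $\psi_n$ cannot be uniformly small on a full annulus while satisfying $\psi_n(0) = 1$; one must instead make $\psi_n$ small only on a specific sparse subset of the tile boundary, chosen to cover the typical boundary-crossing points of a generic orbit. A reasonable construction is to take $\psi_n$ as an interpolating entire function of controlled order, vanishing along a carefully chosen sparse lattice inside the tile-boundary annulus and with prescribed value $1$ at $0$; standard extremal estimates for entire functions of finite order then control $\log M_{\psi_n}$. Tuning the parameters $R_n$, $\eta_n$, and the order of $\psi_n$, one ensures that for $\mu$-a.e.\ $x$ the series $F_x(z) := \sum_n \eta_n \psi_n(\zeta_n(T_z x))$ converges absolutely on compact subsets of $\C$ and matches the bound $\log\log M_{F_x}(R) = o(\log^{3+\eps} R)$.

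The main obstacle is reconciling the piecewise nature of each $\Delta_n$ with the required analyticity of $F_x$. The local-position map $\zeta_n$ has jump discontinuities of size $O(R_n)$ at tile boundaries, so $\Delta_n$ is \emph{not} analytic on the whole orbit. However, a Borel--Cantelli argument applied to the boundary sets (of measure $\le 2^{-n}$, summable in $n$) shows that for $\mu$-a.e.\ $x$ and every fixed $R > 0$, only finitely many scales contribute tile-boundary crossings inside $\overline{R\D}$; thus $F_x|_{\overline{R\D}}$ is a finite sum of analytic pieces plus an exponentially small tail. Arranging $\psi_n$ to vanish on the relevant boundary subset then makes this finite sum genuinely entire. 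Non-constancy of $F_x$ is secured by choosing $\eta_1$ so that the leading correction is non-negligible and cannot be cancelled by the exponentially smaller higher-order terms. The exponent $3+\eps$ ultimately arises from balancing the Rokhlin-tower measure decay against the extremal growth bound for $\psi_n$ and the summability of $\sum_n \eta_n \psi_n$; the gap to the lower bound $2-\eps$ of Theorem \ref{thm:Us}(b) reflects inefficiencies inherent in this direct construction.
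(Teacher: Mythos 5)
The high-level plan (multi-scale construction, Rokhlin towers, Borel--Cantelli for almost-sure convergence, tuning parameters for a $3+\eps$ exponent) is the right skeleton, but the proposal is missing the idea that actually makes the construction work, and the step you substitute for it fails.

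Your $\Delta_n(T_zx)=\eta_n\psi_n(\zeta_n(T_zx))$ is piecewise analytic in $z$: inside a scale-$n$ tile it equals $\eta_n\psi_n(\zeta_n(x)+z)$, and after a boundary crossing it becomes $\eta_n\psi_n(\zeta_n(x')+z)$ with $\zeta_n(x')-\zeta_n(x)$ of size $O(R_n)$. Your proposed remedy — ``arranging $\psi_n$ to vanish on the relevant boundary subset'' — does not restore analyticity. For the two analytic pieces $\psi_n(a+z)$ and $\psi_n(b+z)$ (with $a\ne b$) to even \emph{agree} along the one-real-dimensional boundary arc, the identity theorem would force $\psi_n$ to be $(b-a)$-periodic, hence constant on the scales of interest; vanishing at one curve is neither necessary nor sufficient for the two pieces to be the same entire function. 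Making $\psi_n$ merely small near tile boundaries only makes the jumps small, not zero, so $F_x$ remains discontinuous on a set of positive length inside every sufficiently large disk: it is not entire. In short, a sum of piecewise-analytic corrections with nonzero jumps cannot produce an entire function, no matter how the jump magnitudes are tuned.

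The paper bridges exactly this gap with a $\bar\partial$-correction: at each scale it takes the naive piecewise-holomorphic glueing $g=\chi\cdot g_0$ (cut off by a smووth bump $\chi$ supported near the tiles), computes $\bar\partial g$, and solves $\bar\partial\alpha=\bar\partial g$ using H\"ormander's weighted estimate with a carefully designed subharmonic weight $u$ (Lemmas~\ref{lem:PrisonBreak}--\ref{lem:SHExtension}). The weight is large in the corridors between tiles (so $\alpha$, and hence the correction, stays small on each tile) yet has controlled global growth (so $f=g-\alpha$ has bounded growth on the whole big square). The output is a genuinely entire $f$ approximating the piecewise data to exponential accuracy (Lemma~\ref{lem:EntireExtension}). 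The exponent $3+\eps$ then falls out of iterating the H\"ormander bound $\mathcal M\mapsto\mathcal M\cdot e^{\pi C^2}$ with $C\asymp n\log^2 n$, together with the requirement $\sum a_n/a_{n+1}<\infty$ needed for the Nested Towers Lemma. Your proposal has no analogue of this $\bar\partial$ step, and the place where you would need it (``makes this finite sum genuinely entire'') currently contains an incorrect claim. Two smaller gaps: the towers must be \emph{nested} ($S_{a_n}B_n\subset S_{a_{n+1}}B_{n+1}$) for the inductive definition to be consistent across scales, which you don't impose; and the ``local position'' maps $\zeta_n$ at different scales need to be made mutually consistent via a fine partition (the paper's Lemma~\ref{lem:Partition}), otherwise the scale-$(n{-}1)$ data seen from a scale-$n$ tile is not well defined.
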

This theorem gives an upper bound for the minimal growth rate of measurably entire functions defined on a general free $\C$-action. Nevertheless, note that there is still a gap between the lower and upper bounds known to us so far:
\begin{quest}
Is the gap between the lower bound given by Theorem \ref{thm:Us} and upper bound given by Theorem \ref{thm:UprLwrBnd} justified? Namely, does there exists a $\C$-action on a standard probability space $(X,\mathcal B,\mu)$ and $p\in(2,3)$ such that for every measurably entire function $F:X\rightarrow\C$ for $\mu$-almost every $x\in X$:
\begin{eqnarray*}
\limit R\infty\frac{\log\log\underset{z\in \overline{R\D}}\max\;\abs{F(T_zx)}}{\log^pR}=\infty.
\end{eqnarray*}
\end{quest}
\subsection{Notation}
Given $a>0$ we denote by $S_a$ the square centered at the origin of edge length $2a$, namely $S_a=[-a,a]^2$. \\
Let $A\subset\C$ and $\omega\in\C$. We define by $A(\omega):=\omega+A$, the translation of the set $A$ by $\omega$. \\
For a set $\Omega\subset\C$ we define the sets
$$
\Omega^{+\eps}:=\bset{z\in\C,\; d(z,\Omega)<\eps}\;\;,\;\;\Omega^{-\eps}:=\bset{z\in\C,\; d(z,\Omega^c)>\eps}.
$$

\subsection{Acknowledgments}
The author would like to thank her PhD adviser, Mikhail Sodin, for acquainting her with this most intriguing question, conducting many interesting discussions, and reading many revisions of this paper with great patience. The author is grateful to Jon Aaronson for several helpful discussions, and Alon Nishry for insightful editorial remarks. Last but not least, the author would like to thank the referee for a careful review, and for useful comments and corrections.
\section{Preliminary Lemmas:}
\subsection{Complex Analysis lemmas}
In this subsection we will state and prove lemmas using tools from complex analysis. Throughout this section we will use the letters $\lambda$ and $\mu$ to denote elements of $\C$ (and not measures). The first lemma, proven in this subsection, is a lemma that creates a non-negative subharmonic function with `windows', i.e rectangles where $v= 0$.
\begin{lem}\label{lem:PrisonBreak}
For every $C\ge1$ and for every set $\Lambda\subset\C$, such that for every $\lambda\neq\mu\in\Lambda,\; \norm{\mu-\lambda}\infty>2$, there exists a subharmonic function $v$ such that:
\begin{enumerate}[label=($P_\arabic*$),leftmargin=0.7cm]
\item For every $\lambda\in\Lambda$, define $D_\lambda:=\bset{v= 0}\cap S_1(\lambda)$, then $D_\lambda^{+\frac1C}\subset S_1(\lambda)$ while $S_1(\lambda)\setminus D_\lambda$ is a union of at most $20$ rectangles of edge length at most $2$ and edge width at most $\frac2C$. In particular, $\frac{m\bb{D_\lambda}}{m\bb{S_1}}\ge 1-\frac {80}C$.
\item For every $z\in\C$, $v(z)\le\exp\bb{2\pi C}\exp\bb{\frac{\pi C}2\abs z}$.
\item For every $\lambda\in\Lambda$, $v|_{D_\lambda^{+\frac 5{3C}}\setminus D_\lambda^{+\frac 1{3C}}}\ge\frac12$.
\end{enumerate}
\end{lem}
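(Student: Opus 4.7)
The natural choice is the concentric window $D_\lambda := \overline{S_{1-2/C}(\lambda)}$. With this choice, property (P1) is immediate: one has $D_\lambda^{+1/C} = S_{1-1/C}(\lambda) \subset S_1(\lambda)$; the complement $S_1(\lambda)\setminus D_\lambda$ is a frame consisting of $4$ rectangles of edge length $2$ and width $2/C$ (well within the permitted $20$); and $m(D_\lambda)/m(S_1) = (1-2/C)^2 \ge 1-4/C \ge 1-80/C$. The region appearing in (P3) becomes the thin frame $S_{1-1/(3C)}(\lambda)\setminus S_{1-5/(3C)}(\lambda)$ of width $4/(3C)$ just inside $\partial S_1(\lambda)$.

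The strategy is to realize $v=\log^+|F|$ for an entire function $F$ of exponential type $\pi C/2$ satisfying $|F|\le 1$ on $\bigcup_\lambda D_\lambda$ (which forces $v=0$ there) and $|F|\ge e^{1/2}$ on every shell (which forces $v\ge 1/2$ there). The growth bound (P2) then follows from the exponential-type estimate $|F(z)|\le C_0 e^{(\pi C/2)|z|}$ after taking $\log^+$, with $\log C_0$ and lower-order terms absorbed into the prefactor $e^{2\pi C}$. A convenient single-square template is a normalization of $f_0(z) := \cos(\pi Cz/2)\cosh(\pi Cz/2)$, whose modulus squared factors as
\[
\bigl[\cos^2(\pi Cx/2) + \sinh^2(\pi Cy/2)\bigr]\cdot \bigl[\sinh^2(\pi Cx/2)+\cos^2(\pi Cy/2)\bigr],
\]
so that $|f_0|$ is small near the origin (both factors close to $1$) and grows exponentially once $\max(|x|,|y|)$ exceeds $1-2/C$; this captures the intended local ``window'' behavior around a single $\lambda$.

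To handle an arbitrary 2-separated set $\Lambda$, one cannot simply sum $\log^+|f_0(\cdot-\lambda)/M|$, because $f_0$ grows exponentially in every direction and destroys the zero structure inside distant $D_\mu$. I would instead build a single $F$ as an infinite canonical product (or a superposition of shifted planar exponentials $e^{i(\pi C/2)\alpha_\lambda(z-\lambda)}$ with unit-modulus phases $\alpha_\lambda$ chosen depending on the geometry of $\Lambda$), arranged so that destructive interference of the exponentials keeps $|F|\le 1$ on every inner square while ``constructive'' growth gives $|F|\ge e^{1/2}$ on every shell. The 2-separation of $\Lambda$ is used both to guarantee the convergence of the construction and to control the local modulus around each $\lambda$ via cancellation of contributions from $\mu\ne\lambda$.

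\textbf{Main obstacle.} The principal technical difficulty is that $|F|\le 1$ on the two-dimensional windows $D_\lambda$ is a very restrictive constraint for a function of exponential type $\pi C/2$: such $F$ has at most $O(R)$ zeros in the disk $|z|\le R$, so its zero set cannot densely populate the planar-density-one union $\bigcup_\lambda D_\lambda$. The control $|F|\le 1$ must therefore come mostly from destructive interference of exponentials, not from vanishing, and orchestrating this for an arbitrary 2-separated $\Lambda$ while simultaneously producing the lower bound $|F|\ge e^{1/2}$ on the $4/(3C)$-wide shells is the key technical step. The multiplicative slack $e^{2\pi C}$ in (P2), together with the precise numerics of the shell widths $1/(3C)$ and $5/(3C)$, is exactly the room needed to make this trade-off feasible.
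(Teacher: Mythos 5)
Your plan is a genuinely different route from the paper's, and it is not a proof: the step you label the \emph{main obstacle} is precisely what the paper's argument is built to avoid, and you do not resolve it.

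The paper never passes through an entire function. It takes $b_C(x+iy)=\cos(\pi C y/2)\cosh(\pi C x/2)$ inside the strip $\abs y<1/C$ and $0$ outside. This is harmonic inside the strip, continuous and nonnegative, and hence subharmonic on all of $\C$; crucially it vanishes \emph{identically} on an open two--dimensional region. No $\log^+\abs F$ with $F$ entire and nonconstant can do that, because $\bset{\abs F\le1}$ for an exponential--type $F$ is constrained in exactly the way you describe. The lemma only asks for a subharmonic $v$, subharmonic functions are closed under locally finite pointwise maxima, and the paper exploits this flexibility: $v$ is built as a local maximum of translated and rotated copies of $b_C$, and the growth bound $(P_2)$ comes straight from $\cosh$, not from a type estimate on an entire function. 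By insisting on $v=\log^+\abs F$ you have discarded exactly the freedom that makes the construction elementary, and you would need to reintroduce it by hand through an interference mechanism that you only gesture at.

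Concretely, two things in your plan are placeholders rather than arguments. First, the passage from one window to arbitrary $2$--separated $\Lambda$ is the whole content of the lemma, and your ``infinite canonical product / superposition of planar exponentials with phases $\alpha_\lambda$ chosen depending on the geometry of $\Lambda$'' is not a construction: you do not say what the phases are, why the product converges, or why distant factors do not destroy $\abs F\le1$ on $D_\mu$. The paper meets the same difficulty head on with a different device: it overlays an even--integer \emph{grid} built from the same $b_C$, so that each window function only has to dominate finitely many ``intruders'' (at most $16$ from neighboring cells plus $4$ of its own, giving the $20$ in $(P_1)$); nothing in your plan plays that localizing role. Second, you aim for the simpler target $D_\lambda=\overline{S_{1-2/C}(\lambda)}$, but you have not shown $\abs F>1$ on all of $S_1(\lambda)\setminus\overline{S_{1-2/C}(\lambda)}$ (needed for $D_\lambda$ to actually equal that square rather than something larger), nor that $\abs F\ge e^{1/2}$ on the $(P_3)$ shell, nor why a single $F$ of exponential type $\pi C/2$ can be $\le1$ on a set whose planar density can be made as close to $1$ as you like (take $\Lambda$ near $(2+\eps)\Z^2$ and $C$ large) while being $\ge e^{1/2}$ on the complementary thin shells. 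Logvinenko--Sereda--type relative--density theorems make this kind of simultaneous control for exponential--type functions at best very delicate; in any case you would need a theorem, and you have not produced one.

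So: different approach, honest about the gap, but the gap is the theorem. If you want to salvage your direction you should drop the requirement that $v$ come from an entire function and allow yourself piecewise--harmonic subharmonic pieces glued by $\max$, at which point you will essentially be rediscovering the paper's $b_C$ and grid.
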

\begin{proof}
Given $C\ge1$ we define the subharmonic function
$$
b_C(z)=b_C(x+iy)=	\begin{cases}
						\cos\bb{\frac{\pi C}2\cdot y}\cosh\bb{\frac{\pi C}2\cdot x} & , \abs y<\frac1C\\
						0&, \text{otherwise}
						\end{cases}.					
$$
This function is $0$ outside an infinite horizontal strip of width $\frac2C$. Given $\lambda\in\Lambda$ we define the \underline{window function assigned to $\lambda$} by
$$
v_\lambda(z):=\max\bset{b_C( {i}z-\lambda+1),b_C(z-\lambda+i),b_C( {i}z-\lambda-1),b_C(z-\lambda-i)}.
$$

The set $\bset{z,\; v_\lambda(z)\neq 0}$ looks like a window, whose cornices have `infinite tails' (see Figure \ref{fig:windowFunction}). In addition, note that $v_\lambda|_{S_1^{-\frac1C}(\lambda)}=0$, while $v_\lambda|_{S_3(\lambda)}\le e^{\frac{3\pi C}2}$.

\begin{figure}[!ht]
\centering
    \includegraphics[width=0.65\textwidth]{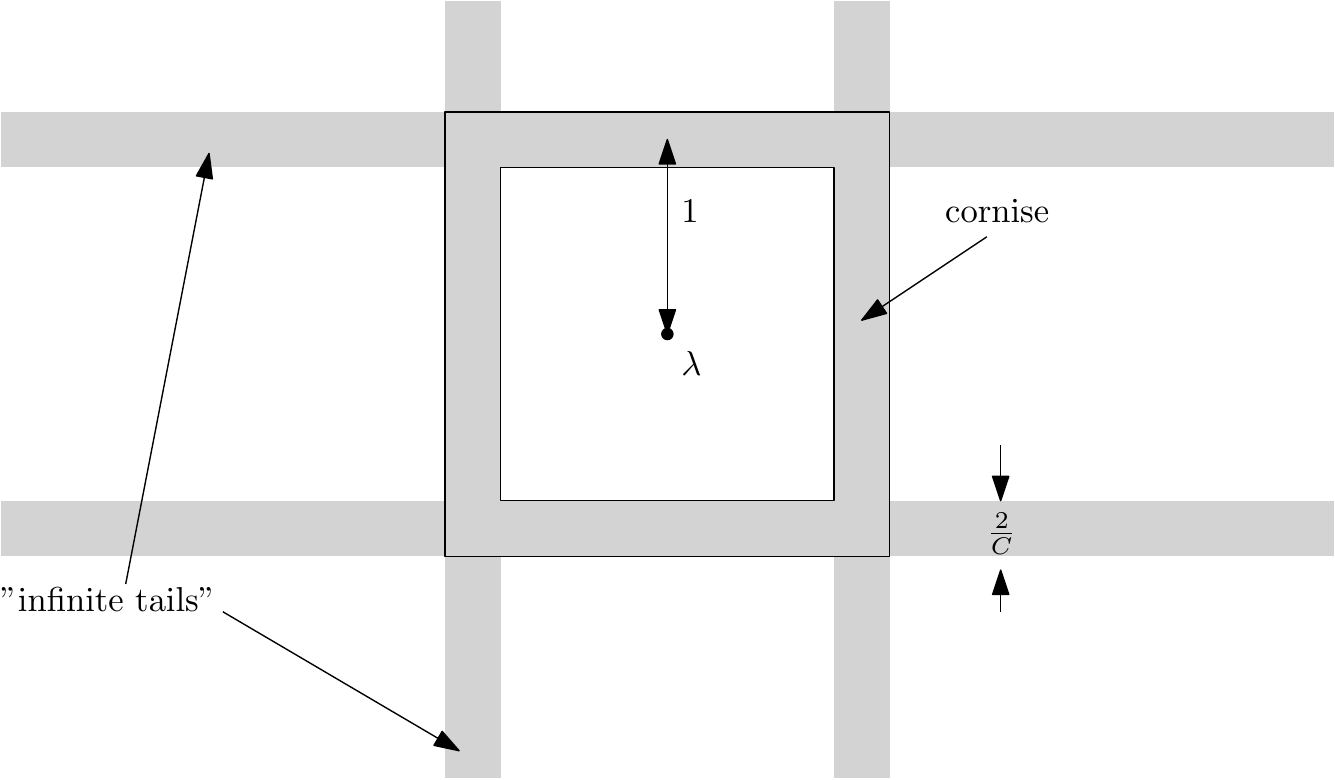}
  \caption{The gray area is where $v_\lambda\neq 0$.}
  \label{fig:windowFunction}
  \end{figure}

We would like to take maximum over window functions assigned to $\lambda\in\Lambda$. Formally, we would like to define $v(z)=\sup\bset{v_\lambda(z),\lambda\in\Lambda}$. The problem is it is not clear that locally we take supremum over a finite set, and even if we do, the elements of $\Lambda$ are not necessarily aligned in the sense that $S_1^{-\frac1C}(\lambda)$ may intersect `infinite tails' of many elements $\mu\neq\lambda\in\Lambda$. We get that the `infinite tails' of windows that were created for different elements of $\Lambda$ might intrude into the window area of other elements, and the number of `intruders' is not necessarily bounded and might cover the whole window, making property $(P_1)$ impossible to satisfy. To overcome this problem, we create a grid using the same base function $b_C$, and then take maximum over `window functions' assigned only to elements inside each grid component, bounding the number of possible `intruders' in each window.
 
Formally, we define the sets $\Z_{odd}:=\bset{2n+1,\; n\in\Z},\; \Z_{even}:=\bset{2n,\; n\in\Z}$ and define the function
$$
v_0(z):=e^{2\pi C}\max\bset{b_C(i\omega+z),b_C(\omega+iz);\; \omega\in\Z_{odd}}.
$$
For every $\omega\in\Z_{odd}$ fixed for every $z\in\C$, 
\begin{eqnarray*}
b_C(i\omega+z)\le\exp\bb{\frac{\pi C}2\cdot \abs{Re(z)}}\\
b_C(\omega+iz)\le\exp\bb{\frac{\pi C}2\cdot \abs{Im(z)}},
\end{eqnarray*}
independently of $\omega$. We get that $v_0$ is bounded by $\exp\bb{\frac{\pi C}2\max\bset{\abs{Re(z)},\abs{Im(z)}}+2\pi C}$. In addition, locally this function is a maximum of at most two subharmonic functions, and therefore it is subharmonic (see Figure \ref{fig:grid}).\\
\begin{figure}[!ht]
\centering
    \includegraphics[width=0.5\textwidth]{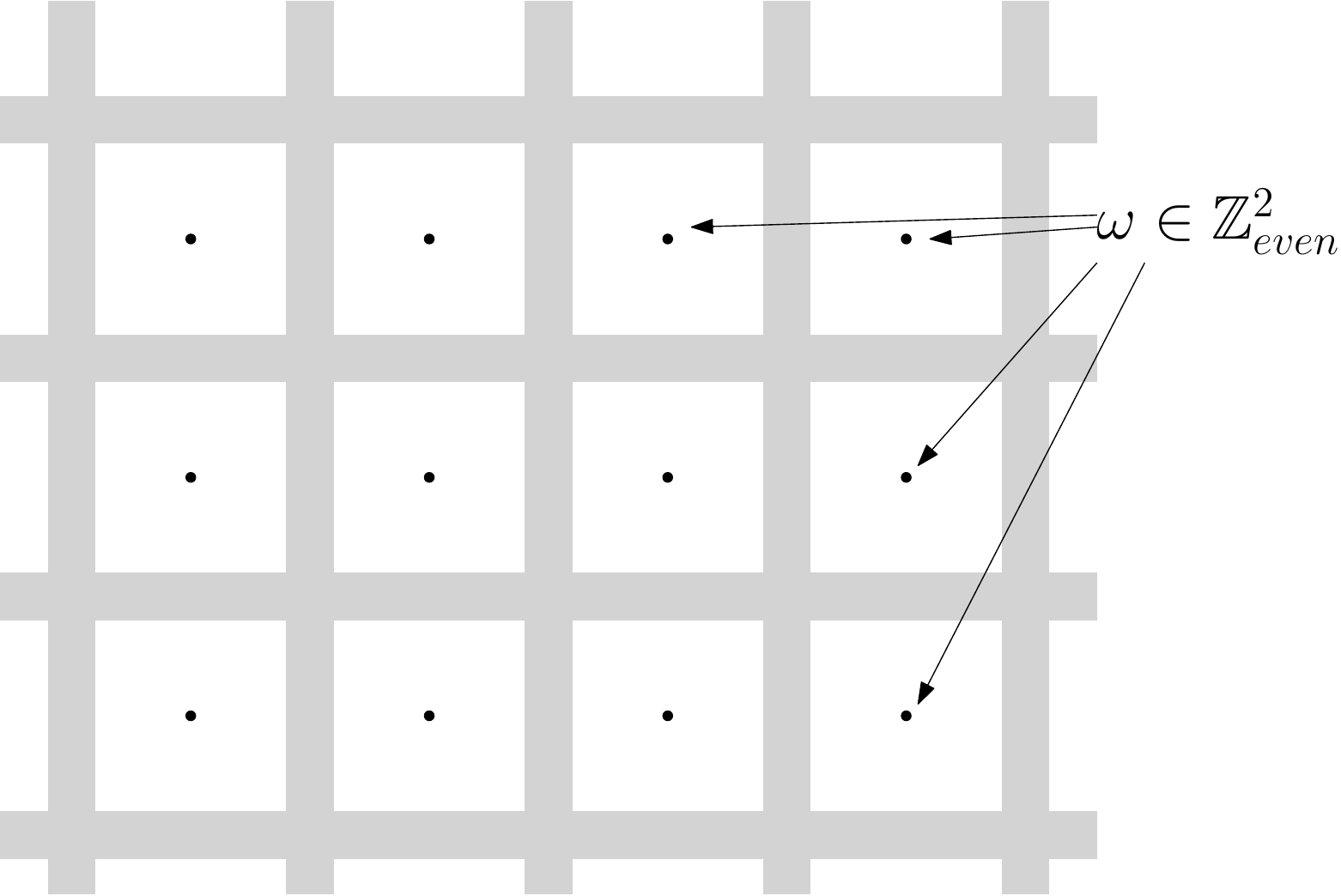}
  \caption{The grid: The gray area is the set where $\bset{v_0\neq 0}$ while the white area is the set where $\bset{v_0= 0}$.}
  \label{fig:grid}
\end{figure}

For every $\lambda\in\Lambda$ we define the set
$$
A_\lambda:=\bset{\omega\in\Z_{even}^2, S_1(\lambda)\cap S_1(\omega)\neq\emptyset}.
$$

$A_\lambda$ is the set of elements $\omega\in\Z_{even}^2$ such that a square of edge $2$ centered at $\omega$ intersects a square of edge $2$ centered at $\lambda$. For every $\lambda\in\Lambda$, $\# A_\lambda\le 4$, since the squares are disjoint, aligned, and have the same edge length, and therefore every such intersection creates a rectangle such that at least one of its corners belongs to $S_1(\lambda)$ (see Figure \ref{fig:NumOfElemsInA_lambda}). 

Symmetrically, for every $\omega\in\Z_{even}^2$ the set defined by
$$
B^\omega:=\bset{\lambda\in\Lambda, S_1(\lambda)\cap S_1(\omega)\neq\emptyset}
$$
also contains at most 4 elements. 

\begin{figure}[!ht]
\centering
  \begin{center}
    \includegraphics[width=0.5\textwidth]{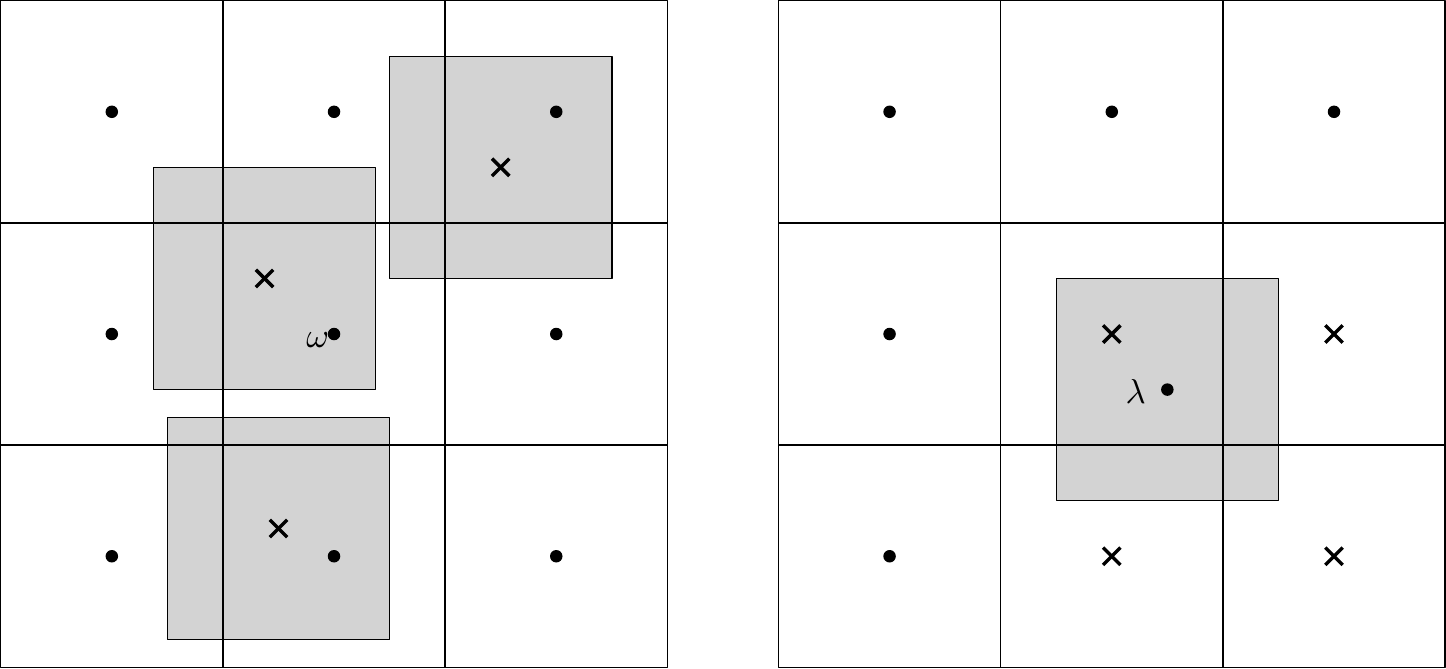}
  \end{center}
  \caption{The right hand picture: the points marked by x represent elements of $\Z_{even}^2$ which belong to $A_\lambda$. The left hand picture: the points marked by x represent elements of $\Lambda$ which belong to $B^\omega$.}
  \label{fig:A_lambdaB_omega}
  \end{figure}

\begin{figure}[!ht]
\centering
  \begin{center}
    \includegraphics[width=0.2\textwidth]{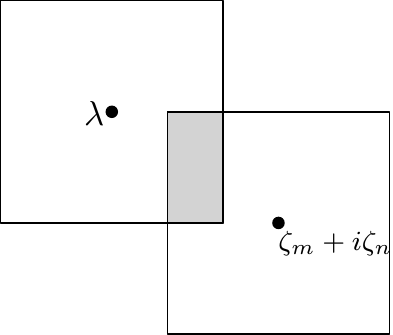}
  \end{center}
  \caption{Since the squares are aligned and have the same edge length, every intersection creates a rectangle such that at least one of its corners belongs to $S_1(\lambda)$.}
  \label{fig:NumOfElemsInA_lambda}
\end{figure}

As mentioned before, for every $\lambda\in\Lambda$, $v_\lambda$ is subharmonic, and $v_\lambda|_{S_1^{-\frac1C}(\lambda)}=0$, while $v_\lambda|_{S_3(\lambda)}\le e^{\frac{3\pi C}2}$. Define
$$
v(z):= \max\bset{v_0(z),\underset{\lambda\in B^\omega}\max\; v_\lambda(z)},\;\omega\in\Z_{even}^2,\; z\in S_1(\omega).
$$
We will first show that this function is well defined and subharmonic. For every $\omega\in\Z_{even}^2$, and every $z\in S_1^{+\frac2{3C}}(\omega)\setminus S_1^{-\frac 2{3C}}(\omega)$
$$
v_0(z)\ge \cos\bb{\frac{\pi C}2\cdot\frac2{3C}}\exp\bb{\frac{\pi C}2\cdot\min\bset{\abs{Re(z)},\abs{Im(z)}}+2\pi C}\ge \frac12 \exp\bb{2\pi C}.
$$
Since $v_\lambda|_{S_3(\lambda)}\le e^{\frac{3\pi C}2}$ and $C\ge 1$ we get that for every $\omega\in\Z_{even}^2$ and every $\lambda\in B^\omega$,
$$
v_0|_{S_1(\omega)\setminus S_1^{-\frac2{3C}}(\omega)}\ge \frac{e^{2\pi C}}2\ge e^{\frac{3\pi C}2}\ge v_\lambda|_{S_3(\lambda)}.
$$
In particular, $v$ defined above, is well defined and is subharmonic since locally it is a maximum over a finite set of subharmonic functions. Moreover, note that for every $\mu\nin B^\omega$ the function $v_\mu$ does not effect the definition of $v$ in $S_1(\omega)$ in any way.\\

Next, for every $\lambda\in\Lambda$ we look at the set $D_\lambda:=\bset{z\in\C,\; v(z)= 0}\cap S_1(\lambda)$. Note that $D_\lambda$ is in fact $S_1(\lambda)$ once we remove from it strips of width $\frac2C$ that originated in the base function $b_C$. By the way $b_C$ was defined, $b_C(x+iy)\ge\frac12$ if $\abs y\le\frac2{3C}$. We get that if $z\in D_\lambda^{+\frac5{3C}}\setminus D_\lambda^{+\frac1{3C}}$, then $z$ belongs to a translation and/or rotation of the strip $\abs y\le\frac2{3C}$, and since $v$ is defined as a maximum of such functions, in particular $v(z)\ge \frac12$.

It is left to bound the number of `intruders' for every $\lambda\in\Lambda$, or formally the number of copies of the set $\bset{b_C\neq 0}$ intersecting $S_1(\lambda)$. For this it is enough to bound the number of elements in $\underset{\omega\in A_\lambda}\bigcup B^\omega\setminus\bset\lambda$. Why is this enough? As we saw above for every $\mu\in\Lambda$ outside the set $\underset{\omega\in A_\lambda}\bigcup S_1(\omega)$, the definition of the function $v\left|_{\scriptscriptstyle{\underset{\omega\in A_\lambda}\bigcup S_1(\omega)}}\right.$ is unchanged whether $\mu\in\Lambda$ or not, and in particular if $A_\mu\cap A_\lambda=\emptyset$, then whether $\mu\in\Lambda$ or not does not change the way $v$ is defined inside $S_1(\lambda)$. We conclude that it is enough to bound the number of elements in the set $\underset{\omega\in A_\lambda}\bigcup B^\omega\setminus\bset\lambda$, but the later is bounded by $16$ as the number of elements in $A_\lambda$ is at most $4$ and the number of elements in $B^\omega$ is at most $4$ as well. Adding the 4 rectangles created by $v_\lambda$ itself we get 20 `intruding' rectangles as needed. Note that though every intersection of $S_1(\lambda)$ with $S_1(\omega)$ contributes two potentially `intruding' rectangles, one horizontal and one vertical, only one of them can intersect $S_1(\mu)^{-\frac1C}$ for $S_1(\mu)$ intersecting $S_1(\omega)$. The reason is that $S_1(\lambda)$ and $S_1(\mu)$ are disjoint, and so one can be positioned either to the left/right with respect to the other (thus intersecting the horizontal rectangle) or above/bellow (thus intersecting the vertical rectangle), but not both.
\end{proof}

An application of this lemma allows us to `glue' together several subharmonic functions $\bset{u_\lambda}$ restricted to disjoint compact subsets of $\C$, $S_1(\lambda)$:
\begin{lem}\label{lem:SHExtension}
Let $C>7$, and let $\Lambda\subset\C$ be such that for every $\lambda\neq\mu\in\Lambda ,\; \norm{\mu-\lambda}\infty>2$. Assume that for every $\lambda\in\Lambda $ there exists $u_\lambda:\C\rightarrow[0,\infty)$ subharmonic such that for a positive constant $\mathcal M$
$$
\underset{\lambda\in\Lambda}\max\;\underset{z\in S_1}\max\; u_\lambda(z)\le \mathcal M.
$$
Then there exists a subharmonic function $u$ such that:
\begin{enumerate}[label=($SH_\arabic*$),leftmargin=1.2cm]
\item For every $\lambda\in\Lambda$ there exists a set $D_\lambda$ such that $D_\lambda^{+\frac1C}\subset S_1(\lambda)$ while $S_1(\lambda)\setminus D_\lambda$ is contained in a union of at most $20$ rectangles of edge length at most $2$ and edge width at most $\frac2C$, and for every $z\in D_\lambda$ we have $u(z)=  u_\lambda(z-\lambda)$.
\item 
$$
\underset{z\in S_C}\max\; u(z)\le 2\mathcal Me^{\pi C^2}.
$$
\item For every $\lambda\in\Lambda$
$$
\underset{z \in  D_\lambda^{+\frac5{3C}}\setminus  D_\lambda^{+\frac 1{3C}}}\min\; u(z)\ge \mathcal M.
$$
\end{enumerate}
\end{lem}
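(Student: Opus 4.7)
The plan is to combine the subharmonic ``barrier'' $v$ produced by Lemma~\ref{lem:PrisonBreak} with the given local data $u_\lambda$ via a pointwise maximum. First I apply Lemma~\ref{lem:PrisonBreak} to $\Lambda$ with the same constant $C$ (which is allowed since $C>7\ge 1$), obtaining a subharmonic $v$ and sets $D_\lambda\subset S_1(\lambda)$ satisfying $(P_1)$--$(P_3)$. The key quantitative input is $(P_3)$: on the annular shell $D_\lambda^{+\frac{5}{3C}}\setminus D_\lambda^{+\frac{1}{3C}}$ one has $v\ge\frac12$, so the rescaled barrier $2\mathcal M v$ already dominates $\mathcal M$ there---exactly the uniform upper bound we have on $u_\lambda|_{S_1}$.

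Next, I fix any $\delta\in(\frac{1}{3C},\frac{5}{3C})$, say $\delta=\frac{1}{C}$, and set the closed ``core'' $K_\lambda:=\overline{D_\lambda^{+\delta}}$, which by $(P_1)$ sits strictly inside $S_1(\lambda)$; the $K_\lambda$ are pairwise disjoint because the squares $S_1(\lambda)$ are (using $\norm{\mu-\lambda}\infty>2$). I then define
\[
u(z) := \begin{cases} \max\{\,2\mathcal M\, v(z),\; u_\lambda(z-\lambda)\,\} & \text{if } z\in K_\lambda \text{ for some } \lambda\in\Lambda, \\ 2\mathcal M\, v(z) & \text{otherwise.} \end{cases}
\]
The three required properties of $u$ should then follow readily. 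For $(SH_1)$ I would use the same $D_\lambda$: on it $v=0$ and $u_\lambda(z-\lambda)\ge 0$, so $u(z)=u_\lambda(z-\lambda)$, with the geometric description inherited directly from $(P_1)$. For $(SH_3)$ I use $(P_3)$, which yields $u\ge 2\mathcal M v\ge \mathcal M$ on the required annulus. For $(SH_2)$ I combine $(P_2)$ with the hypothesis $u_\lambda\le\mathcal M$ on $S_1$; on $S_C$ the diagonal bound $|z|\le C\sqrt 2$ reduces the claim to the elementary inequality $2\pi C+\pi C^2/\sqrt 2\le\pi C^2$, which holds precisely for $C\ge 2\sqrt 2/(\sqrt 2-1)=4+2\sqrt 2\approx 6.83$, and in particular for $C>7$.

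The real content of the proof---and what I expect to be the main obstacle---is verifying that $u$ is globally subharmonic, since it is patched from different descriptions across the interfaces $\partial K_\lambda$. On $\operatorname{int}(K_\lambda)$ the function $u$ is a pointwise max of two subharmonic functions, and off $\bigcup_\lambda K_\lambda$ it equals the subharmonic function $2\mathcal M v$, so subharmonicity there is automatic. The mechanism that handles $\partial K_\lambda$ is exactly the choice of $\delta$: since $\delta\in(\frac{1}{3C},\frac{5}{3C})$, the boundary $\partial K_\lambda$ lies inside the good annulus of $(P_3)$, so there $u_\lambda(z-\lambda)\le\mathcal M\le 2\mathcal M v(z)$ and hence $u\equiv 2\mathcal M v$ on $\partial K_\lambda$. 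Upper semicontinuity at such $z$ then follows from the upper semicontinuity of the two subharmonic ingredients, and the sub-mean-value inequality is immediate from the pointwise domination $u\ge 2\mathcal M v$ together with equality at $z$. Once subharmonicity is in place, the proof is complete.
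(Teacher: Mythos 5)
Your construction is essentially identical to the paper's: the paper also applies Lemma~\ref{lem:PrisonBreak}, sets $D_\lambda=\{v=0\}\cap S_1(\lambda)$, and defines $u$ as $\max\{2\mathcal M v(z),u_\lambda(z-\lambda)\}$ on the dilated sets (the paper uses the open neighborhood $D_\lambda^{+\frac{1}{3C}}$ where you use the closed $\overline{D_\lambda^{+1/C}}$) and $2\mathcal{M}v$ elsewhere, with $(P_3)$ guaranteeing the two branches match up across the interface. The only cosmetic difference is that the paper observes $u$ is locally a max of two subharmonic functions on the open cover $\{D_\lambda^{+\frac{5}{3C}}\}_\lambda\cup\{(\bigcup_\lambda \overline{D_\lambda^{+\frac{1}{3C}}})^c\}$, while you verify upper semicontinuity and the sub-mean-value inequality directly at the seam; both are valid, and your numerical check for $(SH_2)$ matches the paper's.
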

\begin{proof}
Let $v$ denote the subharmonic function obtained by Lemma \ref{lem:PrisonBreak} with the set $\Lambda$ and the constant $C$. Define for every $\lambda\in\Lambda$ the set $D_\lambda:=\bset{v=0}\cap S_1(\lambda)$. Following property $(P_1)$ of the function $v$ guaranteed by Lemma \ref{lem:PrisonBreak}, this set satisfies all the properties described in $(SH_1)$. Define the function
$$
u(z)=		\begin{cases}
		\max\bset{2\mathcal M\cdot v\bb{z}, u_\lambda(z-\lambda)}&, z\in D_\lambda^{+\frac1{3C}}, \;\lambda\in\Lambda\\
		2\mathcal M\cdot v\bb{z}&, \text{otherwise}.
		\end{cases}
$$
We will first show that $u$ is subharmonic. Fix $\lambda\in\Lambda$. Following property $(P_3)$ of the function $v$, for every $z\in D_\lambda^{+\frac {5}{3C}}\setminus D_\lambda^{+\frac 1{3C}}$ we have $v\bb{z}\ge\frac12$, while $\underset{z\in S_1}\max\; u_\lambda(z)\le \mathcal M$. And so:
\begin{eqnarray*}
\underset{z\in  D_\lambda^{+\frac {5}{3C}}\setminus  D_\lambda^{+\frac 1{3C}}}\min\;  2\mathcal M\cdot v\bb{z}\ge\frac{2\mathcal M}2= \mathcal M\ge \underset{z\in S_1}\max\; u_\lambda,
\end{eqnarray*}
which implies that $u$ defined above is well defined and subharmonic as locally it is a maximum between two subharmonic functions. This also proves property $(SH_3)$.

To see that property $(SH_1)$ holds, note that since $u_\lambda\ge 0$, for every $z\in D_\lambda$ we have $u_\lambda(z)=u(z-\lambda)$ as needed.

To see that property $(SH_2)$ holds we observe that for $C>7$,
\begin{eqnarray*}
\underset{z\in S_C}\max\; u&=&2\mathcal M\cdot \underset{z\in S_C}\max\; v\le 2\mathcal M\cdot \exp\bb{2\pi C}\exp\bb{\frac{\pi C}2\cdot \underset{z\in S_C}\max\;\abs z}\le 2\mathcal Me^{\pi C^2},
\end{eqnarray*}
concluding our proof.
\end{proof}
The next lemma is an extension of the previous lemma for `glueing' several entire functions.
\begin{lem}\label{lem:EntireExtension}
Let $C$ and $B$ be sufficiently large constants and let $\Lambda\subset S_C$ be such that for every $\lambda\neq\mu\in\Lambda,\; \norm{\mu-\lambda}\infty>2$. Assume that for every $\lambda\in\Lambda$ there exists $f_\lambda$ analytic in $S_1$ such that for some $\mathcal M>40\log C$,
$$
\underset{\lambda\in\Lambda}\max\;\underset{z\in S_1}\max\; \abs{f_\lambda(z)}\le \exp\bb{2^{1-B}\mathcal M}.
$$
Then there exists an entire function $f$ with the following properties:
\begin{enumerate}[label=($E_{\arabic*}$),leftmargin=0.9cm]
\item For every $\lambda\in\Lambda$ define the set 
$$
A_\lambda=S_1(\lambda)\cap\bset{z,\; \abs{f(z)-f_\lambda(z-\lambda)}<\exp\bb{-\frac{\mathcal M}4}}.
$$
Then for every $\eps>0$, $m\bb{S_1(\lambda)\setminus A_\lambda^{-\eps}}=O\bb{\frac1C+\eps}$\footnote{In fact, $O\bb{\frac1C+\eps}=\frac{160}C+200\eps$.}.
\item 
$$
\underset{z\in S_C}\max\; \abs{f(z)}\le \exp\bb{2^{1-B}\mathcal M\cdot e^{\pi C^2}}.
$$
\end{enumerate}
\end{lem}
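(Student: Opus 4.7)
The plan is to realize $f$ as a smooth patchwork of the local data $\{f_\lambda\}_{\lambda \in \Lambda}$ corrected by a Hörmander $\bar\partial$-solution with $L^2$-control. I apply Lemma~\ref{lem:PrisonBreak} with the given $C$ and $\Lambda$ to produce the subharmonic function $v$ and the window regions $D_\lambda = \{v=0\} \cap S_1(\lambda)$. For each $\lambda$ I pick a smooth cut-off $\chi_\lambda$ equal to $1$ on $D_\lambda^{+1/(3C)}$ and vanishing outside $D_\lambda^{+2/(3C)} \subset D_\lambda^{+1/C} \subset S_1(\lambda)$, so that $|\bar\partial \chi_\lambda| = O(C)$ is supported where $v \ge 1/2$ by property $P_3$. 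Define the smooth function
$$F(z) \;:=\; \sum_{\lambda \in \Lambda} \chi_\lambda(z)\, f_\lambda(z-\lambda).$$
Next, apply Lemma~\ref{lem:SHExtension} with $u_\lambda := \log^+ |f_\lambda|$ (subharmonic, non-negative, bounded by $\mathcal M_1 := 2^{1-B}\mathcal M$) and SH-constant $\mathcal M$, producing a subharmonic weight $u$ on $\C$ such that $u \le \mathcal M_1$ on each $D_\lambda$ (by $SH_1$), $u \ge \mathcal M$ on the transition strip supporting $\bar\partial F$ (by $SH_3$), and $u \le 2\mathcal M\,e^{\pi C^2}$ on $S_C$ (by $SH_2$). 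Solve $\bar\partial h = \bar\partial F$ via Hörmander's $L^2$-theorem with weight $e^{-u}(1+|z|^2)^{-2}$, and set $f := F - h$; since $\bar\partial f = 0$, $f$ is entire.

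For $E_1$, note that on $D_\lambda^{+1/(3C)}$ the cut-off $\chi_\lambda$ is identically $1$ while the supports of all other $\chi_\mu$ lie in disjoint squares $S_1(\mu)$, so $\bar\partial F = 0$ there and hence $h$ is holomorphic on a $1/(3C)$-neighborhood of each $D_\lambda$. Applying the sub-mean-value inequality to $|h|^2$ on a disk of radius $1/(3C)$ about $z \in D_\lambda^{-1/(3C)}$, and using $u \le \mathcal M_1$ on this disk, $u \ge \mathcal M$ on $\mathrm{supp}\,\bar\partial F$, $|\bar\partial F| \le O(C)\, e^{\mathcal M_1}$, and total support area $O(C)$, the Hörmander estimate yields a pointwise bound of the form
$$|h(z)|^2 \;\le\; O(C^9)\, \exp(3\mathcal M_1 - \mathcal M) \;\le\; e^{-\mathcal M/2}$$
provided $B$ is large and $\mathcal M > 40 \log C$, so $|h(z)| < e^{-\mathcal M/4}$ and $z \in A_\lambda$. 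Property $P_1$ bounds $m(S_1(\lambda) \setminus D_\lambda) \le 80/C$, and the further $(1/(3C))$- and $\eps$-contractions lose only $O(1/C + \eps)$ in area, yielding the required measure estimate.

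For $E_2$, one applies the sub-mean-value inequality to $|f|^2$ (subharmonic since $f$ is entire) on a small disk about $z \in S_C$, splits $|f|^2 \le 2|F|^2 + 2|h|^2$, bounds the first term trivially by $e^{2\mathcal M_1}$, and controls the second using the Hörmander $L^2$-bound together with $SH_2$ to extract the growth exponent $\mathcal M_1 e^{\pi C^2}$. The main technical obstacle is the double role of the weight $u$: it must furnish a deep trough of depth at least $\mathcal M$ between each window $D_\lambda$ and its transition strip (for the sharp $E_1$ approximation $|h| \le e^{-\mathcal M/4}$) while remaining modest enough on $S_C$ to deliver the sharp $E_2$ growth $\exp(2^{1-B}\mathcal M\, e^{\pi C^2})$. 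Balancing these opposing demands depends crucially on the dramatic blow-up of $v$ in Lemma~\ref{lem:PrisonBreak} --- which grows like $e^{\pi C^2}$ on $S_C$ yet vanishes on each $D_\lambda$, transferred to $u$ via the construction $\max\{2\mathcal M v,\, u_\lambda\}$ in Lemma~\ref{lem:SHExtension} --- and is precisely why the hypotheses require $C$ and $B$ both to be sufficiently large, in the regime where $e^{\pi C^2}$ dominates $2^B$.
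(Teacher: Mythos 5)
Your proposal traces essentially the same route as the paper's own proof: patch the local data $\{f_\lambda\}$ together with a smooth cut-off supported near the windows $D_\lambda$, build the weight $u$ from Lemma~\ref{lem:SHExtension} applied to $u_\lambda=\log_+|f_\lambda|$, solve $\bar\partial h=\bar\partial F$ by H\"ormander with weight $e^{-u}(1+|z|^2)^{-2}$, set $f=F-h$, and then obtain $(E_1)$ and $(E_2)$ from a sub-mean-value bound on small disks where $h$ is holomorphic. The only differences are cosmetic bookkeeping (per-$\lambda$ cut-offs versus a single global $\chi$ built from the same $D_\lambda$-geometry; transition annuli at radii $1/(3C)$ to $2/(3C)$ rather than $1/(4C)$ to $3/(4C)$; averaging radius $1/(3C)$ rather than $1/(16C)$), and the final exponent accounting is equivalent.
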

\begin{proof}
Let $u$ be the subharmonic function constructed in Lemma \ref{lem:SHExtension} with the set $\Lambda$, the constant $C$, and the functions $u_\lambda=\log_+\abs{f_\lambda}$. Recall the sets $D_\lambda\subset S_1(\lambda)$ defined for every $\lambda\in\Lambda$ in Lemma \ref{lem:SHExtension}. These sets were defined so that for every $z\in D_\lambda$, $u(z)=  u_\lambda(z-\lambda)$, and $S_1(\lambda)\setminus D_\lambda$ is a union of at most 20 rectangle of edge length at most $2$ and edge width at most $\frac2C$. Let $\chi:\C\rightarrow[0,1]$ be a smooth function with the following properties:
\begin{enumerate}[label=(\alph*),leftmargin=0.7cm]
\item For every $\lambda\in\Lambda$, $\chi|_{D_\lambda^{+\frac 1{4C}}}= 1$.
\item $\chi|_{\C\setminus \underset{\lambda\in\Lambda}\bigcup  D_\lambda^{+\frac {3}{4C}}}= 0$.
\item For every $z\in\C$, $\abs{\nabla\chi(z)}\le 100C$.
\end{enumerate}

For example, by taking a convolution of the normalization of a rescaling of the bump function
$$
\phi(z)=\begin{cases}
		\exp\bb{-\frac1{1-\abs z^2}}&, \abs z\le 1\\
		0&, \text{otherwise}
		\end{cases}
$$
by $\frac1{4C}$ so that its integral is one, with the function
$$
\psi(z)=\underset{\lambda\in\Lambda}\sum\; \indic{D_\lambda^{+\frac1{2C}}(\lambda)}(z).
$$

\begin{figure}[!ht]
\centering
  \begin{center}
    \frame{\includegraphics[width=0.5\textwidth]{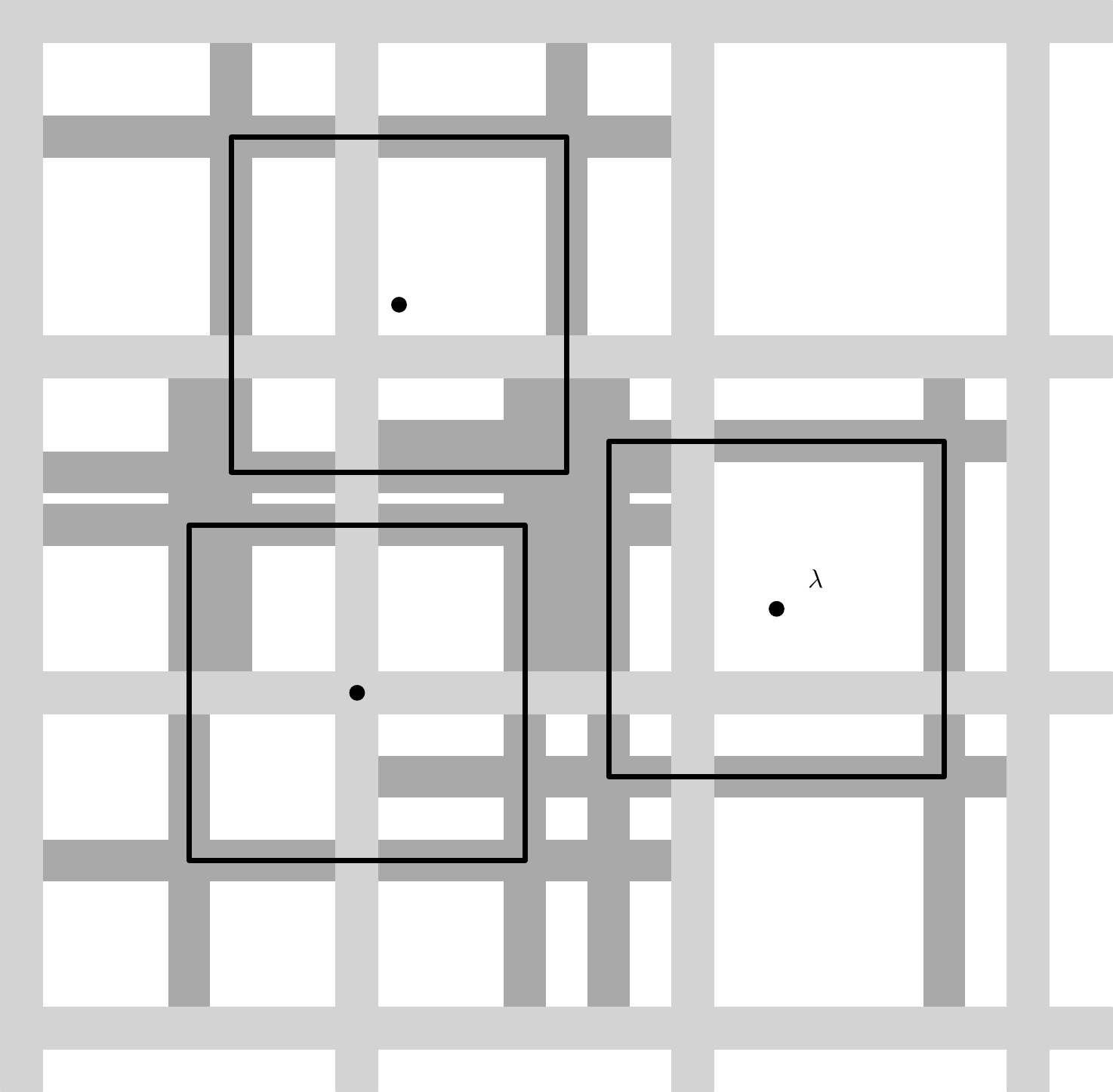}
    \includegraphics[width=0.5\textwidth]{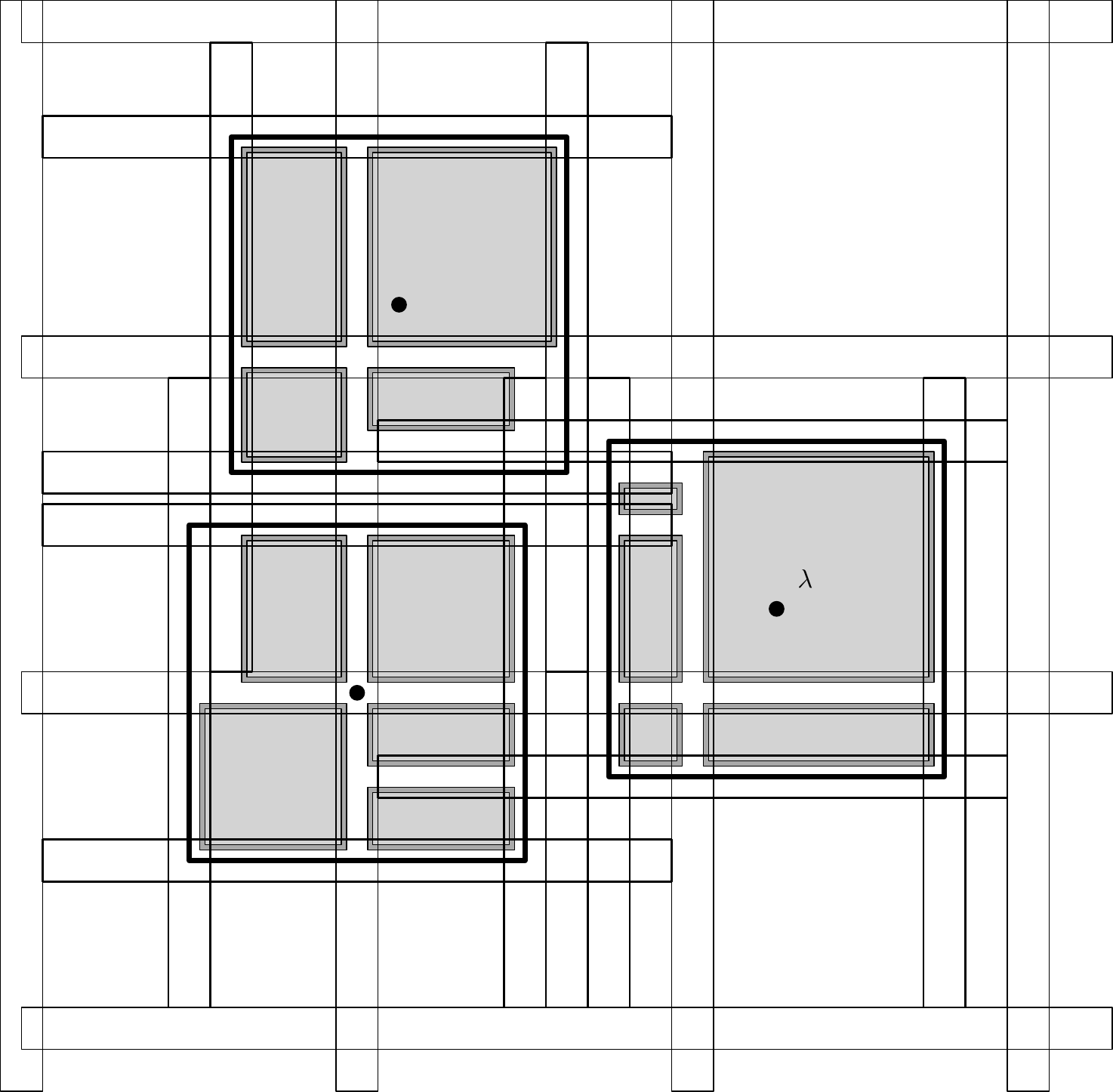}
    }
  \end{center}
  \caption{On the picture to the left, the white area is the area where $\bset{u=0}$. The corridors created by the grid are colored in light gray, while the ones created by elements of $\Lambda$ are colored in dark gray. For every element of $\Lambda$ in the picture, the sets $D_\lambda$ is the white areas within the relevant square.\\
The picture to the right is the same picture, but the light gray area now describes the set where $\chi\equiv 1$, the white area describes the set where $\chi\equiv 0$ and the dark gray area describes the set where the transition occurs.
  }
  \label{fig:chi}
  \end{figure}

Define the function $g_0:\underset{\lambda\in\Lambda}\bigcup S_1(\lambda)\rightarrow\C$ by
$$
g_0(z):= \underset{\lambda\in\Lambda}\sum\; f_\lambda(z-\lambda)\cdot\indic{S_1(\lambda)}(z).
$$
As $D_\lambda^{+\frac1C}\subset S_1(\lambda)$ and the collection $\bset{S_1(\lambda)}_{\lambda\in\Lambda}$ is a collection of disjoint squares, $g_0$ is holomorphic where it is defined, as locally it is just $f_\lambda$ for one particular $\lambda\in\Lambda$.
Define
$$
g(z)=g_0(z)\cdot\chi(z).
$$
Note that $g$ is well defined as the area where $\chi= 0$ separates $S_1(\lambda)$ from $S_1(\mu)$, for $\lambda\neq\mu$ (see Figure \ref{fig:chi}). Next we define the entire function
$$
f(z)=g(z)-\alpha(z),
$$
where $\alpha$ is H\"ormander's solution~\cite[Theorem~4.2.1]{Hormander} to the $\bar\partial$-equation

$$
\bar\partial g(z)=\bar\partial\chi(z)\cdot g_0(z)=\bar\partial \alpha(z),
$$
satisfying
$$
\underset{\C}\int\abs{\alpha(z)}^2\frac{e^{-u(z)}}{\bb{\abs z^2+1}^2}dz\le\frac12\underset{\C}\int\abs{\bar\partial g(z)}^2e^{-u(z)}dz.
$$
First of all, let us bound the right hand side of this inequality: by the definition of $\chi$ and property $(SH_3)$ of $u$-
\begin{eqnarray}\label{eq:RHS}
\underset{\C}\int\abs{\bar\partial g(z)}^2e^{-u(z)}dz&=&\underset{\lambda\in\Lambda}\sum \underset{D_\lambda^{+\frac {3}{4C}}\setminus D_\lambda^{+\frac 1{4C}}}\int\abs{\bar\partial g(z)}^2e^{-u(z)}dz\nonumber\\
&\le& \underset{z\in\C}\max\;\abs{\nabla \chi(z)}^2\exp\bb{2^{2-B}\mathcal M}\cdot e^{-\mathcal M}\cdot m\bb{\underset{\lambda\in\Lambda}\bigcup D_\lambda^{+\frac {3}{4C}}\setminus D_\lambda^{+\frac 1{4C}}}\nonumber\\
&\le&10^4C^2\exp\bb{\mathcal M\bb{2^{2-B}-1}}\cdot 4C^2\cdot\frac{40\cdot 2}{2C}\le C^4\exp\bb{-\frac{\mathcal M}2}\nonumber\\
&\Rightarrow& \underset{\C}\int\abs{\bar\partial g(z)}^2e^{-u(z)}dz\le C^4\exp\bb{-\frac{\mathcal M}2},
\end{eqnarray}
provided that $B$, $C$, and $\mathcal M$ are large enough.

Next, let us find an upper bound for $f$ on $S_C$: Fix $c_0=\frac1{16C}<\frac 1{4C}$, then by Cauchy's integral formula:
\begin{eqnarray*}
\abs{f(z)}^2&\le&\frac1{\pi c_0^2}\underset{B(z,c_0)}\int\abs{f(w)}^2dm(w)\le\frac2{\pi c_0^2}\underset{B(z,c_0)}\int\abs{g(z)}^2+\abs{\alpha(w)}^2dm(w)=I_1+I_2.
\end{eqnarray*}
To bound $I_1$ note that by the way $g$ is defined
$$
I_1\le 2\;\underset{S_{C}}\max\;\abs{g}^2\le2\; {\underset{\lambda\in\Lambda}\max\;}\underset{S_1}\max\; \abs{f_\lambda}^2\le 2\exp\bb{2^{2-B}\mathcal M}\le\frac12\exp\bb{2^{2-B}\mathcal M\cdot e^{\pi C^2}}.
$$
On the other hand, using (\ref{eq:RHS}) and property $(SH_2)$ of $u$ in Lemma \ref{lem:SHExtension},
\begin{eqnarray*}
I_2&=&\frac2{\pi c_0^2}\underset{B(z,c_0)}\int\abs{\alpha(w)}^2dm(w)\le \frac4{\pi c_0^2}\exp\bb{\underset{z\in S_C}\max\; u}C^4\underset{\C}\int \abs{\alpha(w)}^2\frac{e^{-u(w)}}{\bb{\abs w^2+1}^2}dm(w)\\
&\le&\frac {64C^2}{\pi}\exp\bb{2^{2-B}\mathcal Me^{\pi C^2}}C^8\cdot \exp\bb{-\frac{\mathcal M}2}\le C^{11}\exp\bb{2^{2-B}\mathcal M\cdot e^{\pi C^2}}\exp\bb{-\frac12\mathcal M}\\
&\le&\frac12\exp\bb{2^{2-B}\mathcal M\cdot e^{\pi C^2}},
\end{eqnarray*}
provided that $\mathcal M$ is big enough so that $e^{-\frac{\mathcal M}2}\cdot C^{11}<\frac12$. Combining the two estimates we get that
\begin{eqnarray*}
\abs{f(z)}&\le&\sqrt{I_1+I_2}< \exp\bb{2^{1-B}\mathcal Me^{\pi C^2}}.
\end{eqnarray*}
We conclude that property $(E_2)$ holds.

Finally, to see property $(E_1)$, note that:
\begin{enumerate}[label=(\alph*),leftmargin=0.7cm]
\item For every $z\in D_\lambda^{-\frac1{4C}}$, $B(z,c_0)\subset D_\lambda$, which implies that $T_\lambda u(z)=u_{\lambda}(z)$ by property $(SH_1)$ of $u$, and therefore $\underset{w\in B(z,c_0)}\max\; e^u\le \exp\bb{2^{1-B}\mathcal M}$.
\item By the way $f$ was defined, for every $w\in D_\lambda^{+\frac1{4C}}$ we have $f(w)= f_\lambda(w-\lambda)-\alpha(w)$.
\end{enumerate}
By Cauchy's integral formula applied to $z\in D_\lambda^{-\frac1{4C}}$, and the function $\bb{f(z)-T_{-\lambda}f_\lambda(z)}$ which is holomorphic in $B(z,c_0)$, and by using the bound given by (\ref{eq:RHS})
\begin{eqnarray*}
&&\abs{f(z)-T_{-\lambda}f_\lambda(z)}^2=\abs{\frac1{\pi c_0^2}\underset{B(z,c_0)}\int \bb{f(w)-T_{-\lambda}f_\lambda(w)} dm(w)}^2\overset{(b)}=\abs{\frac1{\pi c_0^2}\underset{B(z,c_0)}\int \alpha(w) dm(w)}^2\\
&\le&\frac1{\pi c_0^2}\underset{B(z,c_0)}\int\abs{\alpha(w)}^2dm(w)\le C^7 \underset{w\in\D(z,c_0)}\max\; e^u\cdot\underset{\C}\int\abs{\alpha(z)}^2\frac{e^{-u(z)}}{\bb{\abs z^2+1}^2}dm(z)\\
&\overset{(a)}\le&C^7\cdot \exp\bb{2^{1-B}\mathcal M}\cdot C^4\exp\bb{-\frac{\mathcal M}2}=C^{11}\cdot \exp\bb{\mathcal M\bb{2^{1-B}-\frac12}}\le\exp\bb{-\frac{\mathcal M}4}
\end{eqnarray*}
for $B$, $C$, and $\mathcal M$ large enough. We obtain that $D_\lambda^{-\frac1{4C}}\subset A_\lambda\Rightarrow D_\lambda^{-\frac1{4C}-\eps}\subset A_\lambda^{-\eps}$, but since $S_1(\lambda)\setminus D_\lambda$ is a union of at most 20 rectangles, we get by the inclusion of the sets that
$$
m(S_1(\lambda)\setminus A_\lambda^{-\eps})\le m\bb{S_1(\lambda)\setminus D_\lambda^{-\frac1{4C}-\eps}}\le 40\cdot\bb{5\eps+\frac4C}=O\bb{\eps+\frac1C},
$$
concluding our proof.
\end{proof}
\subsection{Measure Theoretic Lemmas}

In this subsection we will present lemmas related to ergodic theory and dynamics.

\begin{defn}
Let $(X,\mathcal B,\mu)$ be a standard probability space, and suppose $T:\C\rightarrow PPT(X)$ is a free $\C$-action. Let $S\subset\C$ be a compact set. A set $B\in\mathcal B$ is called  an \underline{$S$-set} if 
\begin{enumerate}[label=($F_{\arabic*}$),leftmargin=0.7cm]
\item For every $z\neq w\in S$, $T_zB\cap T_wB=\emptyset$.
\item For every $B'\subset B\subset X$ measurable, and every $A\subset S\subset\C$ measurable, the set $AB':=\underset{z\in A}\bigcup T_zB'$ is a measurable subset of $X$.
\end{enumerate}
\end{defn}

In the definition above, the set $S$ is the set of `shifts' by the action of $\C$, marked $T$, while $B\subset X$ is a very small set that we `shift' by elements of $S$ in the space $X$. For our purpose, $S$ will be a two dimensional square.

We are interested in $S$-sets, for $S\subset\C$ a square, because these sets allow us to assign for every $x\in B$ a function $f_x:S\rightarrow \C$, which is holomorphic in $S$, creating a measurably entire function $f:SB\rightarrow\C$ defined by $f(T_zx)=f(z,x)=f_x(z)$ without worrying about inconsistencies in the definition of $f$. Note that as $B$ is an $S$-set, the map $T_zx\mapsto (z,x)$ is well defined, and so is our function $f$. We would therefore like to approximate our space $X$ by a sequence of sets $\bset{S_{a_n}B_n}_{n=1}^\infty$ where $B_n$ is an $S_{a_n}$-set, and $a_n\nearrow\infty$.

\begin{rmk}\label{rmk:JonsObservation}
For every square $S\subset\C$ and every $B\in\mathcal B$, an $S$-set
$$
\mu\bb{AB}:=\mu\bb{\bset{T_zB,\; z\in A}}=\frac{m(A)}{m(S)}\cdot\mu\bb{SB},\;\text{ for every } A\subset S\text{ measurable}.
$$
\end{rmk}
\underline{Explanation:} Because the measure $\mu$ is a translation invariant measure, we may assume without loss of generality that $S=[-a,a]^2$ for some $a>0$. Every such cube $S\subset\C$ is also a topological group, with the group action defined by
$$
\tau_wz=(w+z) \;mod\; a.
$$
This group is a Polish group (i.e it is a separable completely metrizable topological space), and by Haar's theorem there exists a unique measure (up to multiplication by constants) which is invariant under the group's action. In this case, this is just Lebesgue's measure restricted to $S$. For every $S$-set, $B$, define the measure $\nu_B$ on measurable subsets of $S$ by $\nu_B(A):=\mu(AB)$. Since $B$ is an $S$-set, this is indeed a well defined measure. In addition, since $\mu$ is translation invariant, we get that this measure is invariant under the group's action. We conclude that $\nu_B=c_B\cdot m_S$, where $m_S$ denotes the two dimensional Lebesgue measure restricted to $S$, and $c_B$ is some constant. For a crude bound on $c_B$ note that
$$
c_B=\frac{\nu_B(S)}{m_S(S)}=\frac{\mu(SB)}{m(S)}\le\frac{1}{m(S)}.
$$

\begin{lem}\label{lem:Nested}[The Nested Towers Lemma]
Let $(X,\mathcal B,\mu)$ be a standard probability space, and suppose $T:\C\rightarrow PPT(X)$ is a free $\C$-action. Let $\bset{a_n}_{n=1}^\infty$ be an increasing sequence of positive numbers such that $\sumit n 1 \infty \frac{a_n}{a_{n+1}}<\frac12$. Then there exists a sequence of sets $\bset{B_n}\subset \mathcal B$ such that
\begin{enumerate}[label=($N_\arabic*$),leftmargin=0.7cm]
\item $B_n$ is an $S_{a_n}$-set.
\item $S_{a_n}B_n\subset S_{a_{n+1}}B_{n+1}$.
\item $\mu\bb{S_{a_n}B_n}\nearrow1$ as $n\rightarrow\infty$.
\end{enumerate}
\end{lem}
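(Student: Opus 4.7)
My approach is to apply inductively a Rokhlin-type lemma for free $\C$-actions, which I would take for granted: for every compact $S\subset\C$ and every $\delta>0$ there exists an $S$-set $B\in\mathcal B$ with $\mu(SB)>1-\delta$ (this is a special case of the Ornstein--Weiss Rokhlin lemma for amenable groups, applied to $\C\cong\R^{2}$). The base case $B_1$ is obtained by this lemma with $S=S_{a_1}$ and some small $\delta_1$. My goal for the inductive step is, given an $S_{a_n}$-set $B_n$, to produce an $S_{a_{n+1}}$-set $B_{n+1}$ satisfying $(N_1)$, $(N_2)$ and an estimate of the form $\mu(S_{a_{n+1}}B_{n+1})\ge\mu(S_{a_n}B_n)-C\,a_n/a_{n+1}-\delta_{n+1}$, with $\delta_{n+1}$ a Rokhlin error chosen summable.

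For the inductive step I would first apply the Rokhlin lemma to obtain an auxiliary $S_{a_{n+1}}$-set $\widetilde B$ with $\mu(S_{a_{n+1}}\widetilde B)>1-\delta_{n+1}$, and then modify $\widetilde B$ into $B_{n+1}$ so that its tower engulfs the old tower $S_{a_n}B_n$. The structural fact driving the modification---implicit in Remark~\ref{rmk:JonsObservation}---is that $S_{a_{n+1}}$ tiles, up to a boundary strip of relative width $O(a_n/a_{n+1})$, into roughly $\lfloor a_{n+1}/a_n\rfloor^{2}$ disjoint translates of $S_{a_n}$. Using this tiling, each $S_{a_{n+1}}$-tile of $\widetilde B$ decomposes into $S_{a_n}$-sub-tiles, and inside each $S_{a_{n+1}}$-tile one chooses the new base point so that the resulting $S_{a_n}$-sub-tiles align with the existing $B_n$-structure whenever they intersect $S_{a_n}B_n$. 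Base points of $B_n$ lying outside $S_{a_{n+1}}\widetilde B$ are adjoined as extra base points of $B_{n+1}$.

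The main obstacle is preserving the separation $\norm{z-w}{\infty}>2a_{n+1}$ for distinct base points of $B_{n+1}$: the raw set $B_n$ is only $2a_n$-separated, hence vastly too dense for an $S_{a_{n+1}}$-set, and the alignment step near the boundary of the old tower $S_{a_n}B_n$ may bring two would-be base points closer than $2a_{n+1}$. The necessary discards live in a boundary strip of thickness $O(a_n)$ around $\partial(S_{a_n}B_n)$, which has relative measure $O(a_n/a_{n+1})$ inside an $S_{a_{n+1}}$-tile. Summing these losses together with the Rokhlin errors $\delta_n$ (which I would take summable, say $\delta_n=2^{-n}$), and using the hypothesis $\sumit n 1 \infty a_n/a_{n+1}<\tfrac12$, keeps the total defect below $1$, so $\mu(S_{a_n}B_n)$ is eventually non-decreasing and tends to $1$, giving $(N_3)$. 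Measurability of $B_n$ throughout follows from the measurable nature of the Rokhlin lemma together with the tile-by-tile character of the modifications.
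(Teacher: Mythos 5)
Your proposal goes in the \emph{opposite direction} from the paper, and this direction does not work. The paper never tries to modify $B_{n+1}$ so that its tower engulfs $S_{a_n}B_n$; instead it trims the \emph{lower-level} sets. Starting from raw Rokhlin sets $B_j(0)$ for every $j$, it defines iteratively $B_j(k+1)=\bset{x\in B_j(k):\; S_{a_j}x\subset S_{a_{j+1}}B_{j+1}(k)}$ and finally takes $B_j:=\bintersect k 0 \infty B_j(k)$. The whole subtlety is that this trimming has to be done \emph{infinitely many times at every level}, and the control comes from the telescoping inclusion $S_{a_j}B_j(k)\setminus S_{a_j}B_j(k+1)\subseteq S_{a_{j+1}}B_{j+1}(k-1)\setminus S_{a_{j+1}}B_{j+1}(k)$, which eventually bounds each loss by a Rokhlin error at a very high level. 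A one-pass upward induction like yours has no analogue of that mechanism and the measure bookkeeping does not close.

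Concretely, two steps in your inductive construction fail. First, ``inside each $S_{a_{n+1}}$-tile one chooses the new base point so that the resulting $S_{a_n}$-sub-tiles align with the existing $B_n$-structure'' assumes that the $B_n$-tiles landing inside a fixed $S_{a_{n+1}}$-tile are consistent with a single $S_{a_n}$-grid. They are not: they are merely $2a_n$-separated, not arranged periodically, and translating the $S_{a_{n+1}}$-tile shifts all prospective sub-tiles by the same vector, so you can align at most one of the old $B_n$-tiles, not all of them. Second, ``base points of $B_n$ lying outside $S_{a_{n+1}}\widetilde B$ are adjoined as extra base points of $B_{n+1}$'' destroys the $S_{a_{n+1}}$-set property: a stray $x\in B_n$ has no reason to be $2a_{n+1}$-separated (in the $\norm{\cdot}{\infty}$-sense) from the other base points of $B_{n+1}$, and $x$ carries an $S_{a_n}$-tile, not an $S_{a_{n+1}}$-tile, so it cannot act as a base point for the larger tower at all. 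The paper sidesteps both problems precisely by discarding the badly-placed points of $B_n$, not by welding them onto $B_{n+1}$. If you want to rescue an inductive scheme, you must accept that the lower-level sets get revised when a new level is introduced, and you need the claim (proved in the paper via connectedness and alignment of squares) that $S_{a_j}x$ meets at most one tile $S_{a_{j+1}}y$ entirely, so that ``fitting inside the union of tiles'' is the same as ``fitting inside a single tile''; that is the lemma's engine.
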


This lemma, originally proven by Weiss in \cite{Weiss1996}, is a natural extension of Rokhlin's lemma about approximating actions of $\Z$ by Rokhlin towers.
Rokhlin's lemma for $\C$-action states that for every rectangle $R\subset\C$ and for every $\eps>0$ there exists a set $B$ which is an $R$-set such that $\mu(RB)>1-\eps$. This lemma is not enough as we would like the approximation of the space $X$ to be monotone.

The version of Rokhlin's lemma for $\C$-actions was proven by Lind in \cite{Lind1975}. We note that Weiss' definition for an $S$-set admits a weaker property than property $(F_2)$. Nevertheless, his proof of The Nested Tower Lemma extends to our definition of an $S$-set. For the reader's convenience the proof of this lemma can be found in the appendix.\\

Given a metric space $(Y,d)$ we let $\mathcal K(Y)$ denote the set of all compact subsets of $Y$. For every $A,B\in \mathcal K(Y)$ define the metric
$$
d_H(A,B):=\inf\bset{\eps>0, A\subset B^{+\eps}\text{ and } B\subset A^{+\eps}}.
$$
$d_H$ is called \underline{the Hausdorff distance induced by $(Y,d)$}.\\
We say that $(\mathcal K(Y),d_H)$ is \underline{the induced Hausdorff space of $(Y,d)$}.\\

Let $(X,\mathcal B,\mu)$ be a standard probability space, and let $\bset{B_n}$ be a sequence of sets given by the Nested Towers Lemma, Lemma \ref{lem:Nested}. Assume that $\mathcal P_{n-1}$ is a finite partition of $B_{n-1}$ into measurable sets $\bset{B_{n-1}^j}_{j=1}^{k_{n-1}}$. For every $x\in B_n$ and $1\le\ell\le k_{n-1}$ define the set 
$$
R_n^\ell(x):=\bset{z\in S_{a_n};\;T_zx\in B_{n-1}^\ell}.
$$
Since $B_{n-1}$ is an $S_{a_{n-1}}$-set, for every $z\neq w\in R_n^\ell(x)$ we have that $S_{a_{n-1}} T_zx\cap S_{a_{n-1}} T_wx=\emptyset$ which implies $\norm{z-w}\infty>2a_{n-1}$. In particular, $R_n^\ell(x)$ is a finite set and therefore compact.

Given $\delta>0$, we say a partition $\mathcal P_n=\bset{B_n^j}_{j=1}^{k_n}$ is a \underline{$\delta$-fine partition consistent with $\mathcal P_{n-1}$} if it is a finite measurable partition of $B_n$, and for every $1\le j\le k_n$ for every $x,y\in B_n^j$, $d_H(R_n^\ell(x),R_n^\ell(y))<\delta$ for every $1\le\ell\le k_{n-1}$.

\begin{lem}\label{lem:Partition}
Let $\bset{B_n}$ be a sequence of sets given by the Nested Towers Lemma. For every sequence of positive numbers $\bset{\delta_n}$ there exists a sequence of partitions $\bset{\mathcal P_n}$ such that for every $n$, $\mathcal P_n$ is a $\delta_n$-fine partition consistent with $\mathcal P_{n-1}$.
\end{lem}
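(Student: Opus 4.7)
The plan is to build the partitions $\mathcal P_n$ by induction on $n$, taking as base $\mathcal P_1:=\bset{B_1}$ the one-atom partition (any $\delta_1$-fine condition is then vacuous). Assume inductively that $\mathcal P_{n-1}=\bset{B_{n-1}^j}_{j=1}^{k_{n-1}}$ has been produced. First I partition $S_{a_n}$ into a finite grid of pairwise disjoint half-open squares $\bset{Q_i}_{i=1}^{M_n}$ of diameter strictly less than $\delta_n/2$, choosing the grid fine enough so that each $Q_i$ also has side length $\le 2a_{n-1}$; in particular each cell can be translated into $S_{a_{n-1}}$, say $Q_i=w_i+Q_i'$ with $Q_i'\subset S_{a_{n-1}}$.

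For every pair $(i,\ell)\in\bset{1,\dots,M_n}\times\bset{1,\dots,k_{n-1}}$ I set
\[
E_i^\ell:=\bset{x\in B_n;\; R_n^\ell(x)\cap Q_i\neq\emptyset}=B_n\cap\bigcup_{z\in Q_i}T_{-z}B_{n-1}^\ell.
\]
The key step is to verify that $E_i^\ell$ is measurable. Changing variables via $z=w_i+z'$ and then $w=-z'$ gives
\[
\bigcup_{z\in Q_i}T_{-z}B_{n-1}^\ell=T_{-w_i}\Bigl(\bigcup_{w\in -Q_i'}T_{w}B_{n-1}^\ell\Bigr)=T_{-w_i}\bb{(-Q_i')\,B_{n-1}^\ell},
\]
and the set in parentheses is measurable by property $(F_2)$ of the $S_{a_{n-1}}$-set $B_{n-1}$ (applied to $-Q_i'\subset S_{a_{n-1}}$ and to the measurable subset $B_{n-1}^\ell\subset B_{n-1}$), while $T_{-w_i}\in PPT(X)$ is bi-measurable, so measurability is preserved.

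Now I let $\mathcal P_n$ be the common refinement of the finitely many two-set partitions $\bset{E_i^\ell,B_n\setminus E_i^\ell}$, discarding empty atoms; this yields a finite measurable partition of $B_n$ with at most $2^{M_n\cdot k_{n-1}}$ atoms. If $x$ and $y$ lie in the same atom, then for every $\ell$ the finite sets $R_n^\ell(x)$ and $R_n^\ell(y)$ intersect exactly the same subcollection of cells $\bset{Q_i}$, so every point of one lies within the diameter of some shared cell of a point of the other, which forces $d_H(R_n^\ell(x),R_n^\ell(y))<\delta_n/2<\delta_n$. Hence $\mathcal P_n$ is $\delta_n$-fine and consistent with $\mathcal P_{n-1}$, closing the induction. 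The only genuine difficulty in the argument is the measurability of $E_i^\ell$, which is precisely what property $(F_2)$ of an $S$-set was built to provide; the remainder is a standard refinement argument using compactness of $S_{a_n}$.
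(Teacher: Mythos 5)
Your proof is correct, but it takes a genuinely different route from the paper's. The paper defines, for each $\ell$ and each $x\in B_n$, the set $D_n^\ell(x)=\bset{y\in B_n:\,d_H(R_n^\ell(x),R_n^\ell(y))<\delta_n/2}$; these form an (a priori uncountable) cover of $B_n$, and the paper extracts a finite subcover by invoking Claim~\ref{clm:inducedHousdorff}, that the Hausdorff hyperspace of a totally bounded space is totally bounded; the partitions $\mathcal Q_\ell$ then come from that finite subcover, and measurability is only asserted via ``$f_\ell$ is measurable'' without an explicit verification. You instead tile $S_{a_n}$ by a finite grid $\bset{Q_i}$ of small cells and classify each $x\in B_n$ by which cells $R_n^\ell(x)$ meets; finiteness of the resulting partition is then automatic from the finiteness of the grid, so Claim~\ref{clm:inducedHousdorff} is never needed, and more importantly measurability of the classifying sets $E_i^\ell = B_n\cap T_{-w_i}\bigl((-Q_i')B_{n-1}^\ell\bigr)$ is reduced cleanly and explicitly to property $(F_2)$ of the $S_{a_{n-1}}$-set $B_{n-1}$ together with bi-measurability of $T_{-w_i}$, after first shrinking the cells so each can be translated into $S_{a_{n-1}}$. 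Your Hausdorff-distance estimate is also slightly sharper (you get $d_H<\delta_n/2$ directly rather than $<\delta_n$ via a triangle inequality through a center $\xi$). Net effect: the paper leans on a soft compactness lemma about hyperspaces and leaves the measurability step implicit, while your version is more elementary and makes the only nontrivial measure-theoretic step fully explicit, at the (harmless) cost of producing a much finer partition of size up to $2^{M_n k_{n-1}}$.
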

\begin{proof} We will prove it by induction on $n$, where for the base step one could take any partition $\mathcal P_1$. Assume $\mathcal P_{n-1}$ was already defined and for every $1\le\ell\le k_{n-1}$ define the function $f_\ell:B_n\times B_n\rightarrow\R_+$ by
$$
f_\ell(x,y)=d_H(R_n^\ell(x),R_n^\ell(y)).
$$
For every $x\in B_n$ let
$$
D_n^\ell(x)=\bset{y\in B_n,\; f_\ell(x,y)<\frac{\delta_n}2}.
$$
As $f_\ell$ is a measurable function, these sets are measurable and form a cover for $B_n$. We will first show there exists a finite sub-cover of $\bset{D_n^\ell(x)}_{x\in B_n}$ for $B_n$. If no such sub-cover exists, then there exists a subsequence $\bset{x_m}$ such that for every $k\neq m$ we have $f_\ell(x_m,x_k)\ge\frac{\delta_n}2$ creating an infinite separated set in $\mathcal K(Y)$, which is a contradiction to the fact that the induced Hausdorff space of a totally bounded set is itself totally bounded (see Claim \ref{clm:inducedHousdorff}).

Let $Q_\ell$ denote the finite set of elements $x\in B_n$ forming the finite cover of $B_n$, and let $\mathcal Q_\ell$ denote the partition that we obtain by the collection $\bset{D_n^\ell(x)}_{x\in Q_\ell}$. Note that for every $A\in\mathcal Q_\ell$ there exists $\xi\in Q_\ell$ such that $A\subseteq D_n^\ell(\xi)$, and therefore for every $x,y\in A$, 
$$
f_\ell(x,y)\le f_\ell(x,\xi)+f_\ell(\xi,y)<\delta_n.
$$
We define $\mathcal P_n$ to be the refinement of all the partitions $\mathcal Q_\ell$, $\mathcal P_n=\bset{\bintersect\ell 1{k_{n-1}} A_\ell,\; A_\ell\in\mathcal Q_\ell}$. Let $A\in\mathcal P_n$, then for every $x,y\in A$ for every $1\le\ell\le k_{n-1}$ there exists $B\in\mathcal Q_\ell$ such that $x,y\in B$, and therefore $f_\ell(x,y)<\delta_n$, which implies that $\mathcal P_n$ is a $\delta_n$-fine partition consistent with $\mathcal P_{n-1}$, concluding our proof.
\end{proof}

\begin{claim}\label{clm:inducedHousdorff}
Let $(Y,d)$ be a metric space. If $Y$ is totally bounded, then the induced Hausdorff space, $(\mathcal K(Y),d_H)$, is totally bounded.
\end{claim}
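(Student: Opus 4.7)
The plan is to build an explicit finite $\eps$-net in $(\mathcal{K}(Y), d_H)$ out of an $\eps$-net in $(Y,d)$. Fix $\eps > 0$. Since $Y$ is totally bounded, there exist finitely many points $y_1, \ldots, y_N \in Y$ with $Y \subset \bigcup_{i=1}^N B(y_i, \eps)$. Let $\mathcal{F}$ denote the collection of all nonempty subsets of $\{y_1, \ldots, y_N\}$; this is a finite family (of cardinality $2^N - 1$) of compact subsets of $Y$, and the claim is that $\mathcal{F}$ is an $\eps$-net in $\mathcal{K}(Y)$.

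Given an arbitrary $K \in \mathcal{K}(Y)$, I would associate the witness
\[
F_K := \bset{y_i : 1 \le i \le N,\; B(y_i, \eps) \cap K \neq \emptyset} \in \mathcal{F}.
\]
The set $F_K$ is nonempty because $K$ is nonempty and $\{y_i\}$ covers $Y$. The two inclusions defining the Hausdorff distance then follow directly. On the one hand, every $y_i \in F_K$ lies within distance $\eps$ of some point of $K$ by construction, so $F_K \subset K^{+\eps}$. On the other hand, every $k \in K$ belongs to some ball $B(y_i, \eps)$, and this $y_i$ automatically lies in $F_K$, so $K \subset F_K^{+\eps}$. Combining the two gives $d_H(K, F_K) \le \eps$.

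There is essentially no obstacle here beyond checking the definitions; the only mild subtlety is whether $\mathcal{K}(Y)$ is understood to contain the empty set, in which case one appends $\emptyset$ to $\mathcal{F}$ (and notes that $d_H(\emptyset, \emptyset) = 0$), but otherwise the argument is unchanged. Since $\eps > 0$ was arbitrary and $\mathcal{F}$ is finite, $(\mathcal{K}(Y), d_H)$ is totally bounded, as claimed.
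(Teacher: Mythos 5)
Your proof is correct and takes essentially the same approach as the paper: both build a finite $\eps$-net in $\mathcal K(Y)$ by recording, for each compact set, which elements of a finite cover of $Y$ it intersects, and then using one representative point per cover element. The paper phrases the "fingerprint" as a binary sequence and you phrase it directly as a subset of the net $\{y_1,\dots,y_N\}$, but the underlying idea and verification are identical.
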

For the reader's convenience we add a proof of this fact:
\begin{proof}
Let $\eps>0$. Since $Y$ is totally bounded there exists a finite sub-cover $\bset{B_j}_{j=1}^N$, such that for every $j$, $diam(B_j)<\eps$. For every $A\in \mathcal K(Y)$ we define the sequence:
$$
x_j(A)=	\begin{cases}
		1&, A\cap B_j\neq\emptyset\\
		0&, \text{otherwise}.
		\end{cases}
$$
Note that since the sequences' elements are only $\bset{0,1}$ and their length is finite, then the number of possible sequences is finite. Next, let $A$ and $B$ be sets such that they have the same sequence. Fix $a\in A$, then there exists $j$ such that $a\in B_j$ as $\bset{B_j}$ is a cover for $Y$. Since the sequence of $A$ and $B$ are the same $1=x_j(A)=x_j(B)$ and so there exists $b\in B\cap B_j$ and in particular as $diam(B_j)<\eps$, we get that $d(a,b)<\eps$. This shows that $A\subset B^{+\eps}$. By a symmetric argument, $B\subset A^{+\eps}$ as well. We conclude that $d_H(A,B)\le\eps$.\\
For every $1\le j\le N$ we arbitrarily chose an element $b_j\in B_j$, and define for every sequence $\bset{x_j}$ the set $D_{\bset{x_j}}:=\bset{b_j; x_j=1}$. The collection $\bset{D_{\bset{x_j}}}$ forms a finite $\eps$-net for the space $(\mathcal K(Y),d_H)$ concluding the proof.
\end{proof}
\section{Construction of a special sequence}
In this section we will use all the lemmas proven in the previous sections to construct a special sequence of functions that will be used in the proof of Theorem \ref{thm:UprLwrBnd}. Beyond this section the only thing one needs to keep in mind is the following lemma:
\begin{lem}\label{lem:SuperSequence}
Let $(X,\mathcal B,\mu)$ be a standard probability space, and suppose $T:\C\rightarrow PPT(X)$ is a free $\C$-action. Let $\bset{a_n}$ be the sequence defined by $a_1=1$, and $a_{n}=  D\cdot n\log^2n\cdot a_{n-1}$ for every $n\ge2$ where $  D>0$ is some parameter sufficiently large. There exists a sequence of measurable sets $\bset{X_n},\; X_n\nearrow X$, and a sequence of measurable functions $F_n:X_n\rightarrow\C$ with the following properties:
\begin{enumerate}[label=(\roman*),leftmargin=0.5cm]
\item {
$$
\mu\bb{\bset{x\in X_1,\;\abs {F_1(x)}\le\frac14}}\ge\frac1{25}\;\;\text{ and }\;\; \mu\bb{\bset{x\in X_1,\;\abs {F_1(x)}\ge\frac34}}\ge\frac1{25}.
$$}
\item There exists a sequence of measurable sets $\bset{G_n}$ such that $G_n\subset X_n$ and:
\begin{enumerate}[label=(\Alph*),leftmargin=0.7cm]
\item
$$
\mu\bb{X_n\setminus G_n}\le\mu\bb{X_n\setminus X_{n-1}}+\frac1D\cdot O\bb{\frac1{n\log^2n}}.
$$
\item For every $x\in G_n$:
\begin{enumerate}[label=($B_{\arabic*}$),leftmargin=0.9cm]
\item $S_{a_{n-2}}x\subseteq X_{n-1}$, implying that $S_{a_{n-2}}G_n\subseteq X_{n-1}\subseteq X_n$.
\item The function $F_n^x: {\C}\rightarrow\C$ defined by $F_n^x(z)=F_n(T_zx)$ is holomorphic in $S_{a_{n-2}}$.
\item
$
\underset{z\in S_{a_{n-2}}}\max\;\abs{F_n(T_zx)-F_{n-1}(T_zx)}<10^{-2n}.
$
\item For every $1\le m\le n-2$:
$$
\underset{z\in S_{a_m}}\max\;\abs{F_n(T_zx)}\le2\exp\bb{2^{1-B}\mathcal M_B(m+1)},
$$
where $B$ is a numerical constant sufficiently large, and
$$
\mathcal M_B(m)=\exp\bb{B\cdot m+\pi D^2\sumit j 2 {m-1} j^2\log^4j}.
$$
\end{enumerate}
\end{enumerate}
\end{enumerate}
\end{lem}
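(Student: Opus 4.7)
I would prove Lemma \ref{lem:SuperSequence} by induction on $n$, building $F_n$ from $F_{n-1}$ via the Entire Extension Lemma (Lemma \ref{lem:EntireExtension}). Setup: apply Lemma \ref{lem:Nested} to the sequence $\{a_n\}$ (the hypothesis $\sum a_n/a_{n+1}<\tfrac12$ holds for $D$ large, since $a_n/a_{n+1}=1/(D(n+1)\log^2(n+1))$) to obtain tower sets $X_n=S_{a_n}B_n$ with $\mu(X_n)\nearrow 1$; by Rokhlin we may take $\mu(X_1)\ge\tfrac{2}{25}$. Then apply Lemma \ref{lem:Partition} with a rapidly decreasing sequence $\{\delta_n\}$ (doubly exponentially small in $n$, fixed below) to obtain consistent fine partitions $\mathcal P_n$ of $B_n$. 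For the base case, partition $B_1$ into two equal measurable halves and set $F_1\equiv 0$ on the tower over one half and $F_1\equiv 1$ on the other, verifying (i).

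\textbf{Inductive step.} The decisive structural observation is that each template $F_{n-1}^{(j')}$ built at step $n-1$ is \emph{entire} on $\mathbb C$, since Lemma \ref{lem:EntireExtension} outputs entire functions; consequently $F_{n-1}^y$ is analytic on the whole fiber $S_{a_{n-1}}$ for \emph{every} $y\in B_{n-1}$, not merely on $S_{a_{n-3}}$ as $(B_2)$ provides for a general $y\in G_{n-1}$. This resolves what would otherwise be a scale mismatch between the holomorphy scale $a_{n-3}$ in $(B_2)$ and the analyticity scale $a_{n-1}$ required as input to Lemma \ref{lem:EntireExtension}. For each $B_n^j\in\mathcal P_n$ choose a representative $x_j$, form the center set $\Lambda_j:=\{\lambda\in S_{a_n}:T_\lambda x_j\in B_{n-1}\}$ (pairwise $\ell^\infty$-separated by more than $2a_{n-1}$, since $B_{n-1}$ is an $S_{a_{n-1}}$-set), and the local functions $f_\lambda(w):=F_{n-1}^{T_\lambda x_j}(w)$ on $S_{a_{n-1}}$. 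Rescaling by $a_{n-1}$ so that $C=a_n/a_{n-1}=Dn\log^2 n$, apply Lemma \ref{lem:EntireExtension} to produce an entire template $F_n^{(j)}$; set $F_n(T_zx):=F_n^{(j)}(z)$ for $x\in B_n^j$, $z\in S_{a_n}$. Declare $x=T_wx_0\in G_n$ (with $x_0\in B_n^j$) whenever $w$ is at $\ell^\infty$-distance at least $a_{n-2}$ from $\partial S_{a_n}$ (so $S_{a_{n-2}}x$ stays in the fiber, yielding $(B_2)$ from the entirety of $F_n^{(j)}$), $S_{a_{n-2}}x\subset X_{n-1}$ (yielding $(B_1)$), and $w\in A_\lambda^{-a_{n-2}}$ for some $\lambda\in\Lambda_j$ in the sense of property $(E_1)$. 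Properties $(A)$ and $(B_4)$ then follow: $(A)$ combines $(E_1)$ with Remark \ref{rmk:JonsObservation} to convert the linear loss $O(1/C)=O(1/(Dn\log^2 n))$ into a $\mu$-measure bound; $(B_4)$ propagates because $\exp(\pi C^2)=\exp(\pi D^2n^2\log^4 n)$ matches exactly the recursion $\mathcal M_B(n+1)/\mathcal M_B(n)=e^B\exp(\pi D^2n^2\log^4 n)$, which combined with the Cauchy-type bound $(B_3)$ gives $|F_n|_{S_{a_m}}\le|F_{n-1}|_{S_{a_m}}+10^{-2n}\le 2\exp(2^{1-B}\mathcal M_B(m+1))$ for $m\le n-3$, with the endpoint $m=n-2$ handled via one further use of $(E_1)$ reducing to the step-$(n-1)$ template bound on $S_{a_{n-2}}$.

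\textbf{Main obstacle: verifying $(B_3)$.} The subtlest point is the Cauchy-like approximation $|F_n(T_zx)-F_{n-1}(T_zx)|<10^{-2n}$ on $S_{a_{n-2}}$ for $x\in G_n$. Property $(E_1)$ yields $F_n^{(j)}(z)\approx F_{n-1}^{T_\lambda x_j}(z-\lambda)$ with error $e^{-\mathcal M_B/4}$ for the relevant center $\lambda\in\Lambda_j$, but $F_n^{(j)}$ was built from the representative $x_j$ and one must pass from $x_j$ to the actual $x\in B_n^j$. The fineness of $\mathcal P_n$ produces a corresponding center $\lambda_x\in\Lambda_x$ with $|\lambda_x-\lambda|<\delta_n$, and one requires $T_{\lambda_x}x$ and $T_\lambda x_j$ to lie in the same element of $\mathcal P_{n-1}$, so they share the identical entire template $F_{n-1}^{(j')}$. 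Cauchy estimates on $F_{n-1}^{(j')}$, which is bounded on $S_{a_{n-1}}$ by the step-$(n-1)$ output $M_{n-1}$ (doubly exponential in $n$), then convert the shift $|\lambda_x-\lambda|<\delta_n$ into a function error of order $\delta_nM_{n-1}/a_{n-1}$. The chief technical challenge is to choose $\delta_n$ small enough that this shift error plus the $(E_1)$ error remains below $10^{-2n}$, while still allowing Lemma \ref{lem:Partition} to be applied; this balance dictates the doubly-exponential rate of decay of $\{\delta_n\}$, tied inductively to the growth of $\mathcal M_B$.
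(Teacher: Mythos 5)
Your proposal is correct and follows essentially the same architecture as the paper: Nested Towers at the scales $a_n$, $\delta_n$-fine partitions consistent with $\mathcal P_{n-1}$ via Lemma~\ref{lem:Partition}, templates glued at each step by Lemma~\ref{lem:EntireExtension} at the rescaled parameter $C=a_n/a_{n-1}=Dn\log^2 n$, and exactly the same matching $e^{\pi C^2}=\mathcal M_B(n+1)/(e^B\mathcal M_B(n))$ driving $(B_4)$. The two small local deviations from the paper's proof are harmless: the paper takes $F_1(T_zx)=z$ rather than your piecewise $\{0,1\}$ step function (both give property~(i) once $\mu(X_1)$ is made large enough at the Rokhlin stage), and the paper chooses $\delta_n$ via mere uniform continuity of the finitely many templates $F_{n-1}^j$ on a compact set, avoiding the explicit Cauchy-type bound $\delta_n M_{n-1}/a_{n-1}$ and hence not needing to track the doubly-exponential smallness of $\delta_n$ you describe as the ``chief technical challenge'' — the existence of a suitable $\delta_n$ is free once the templates are known to be entire and the partition is finite.
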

\begin{proof}
Let $\bset{B_n}$ be a sequence of sets obtained for the sequence $\bset{a_n}$ by Weiss' Nested Towers Lemma, Lemma \ref{lem:Nested}, such that $\mu\bb{X\setminus S_{a_1}B_1}<\frac 1{200}$.

We will set $X_n:=S_{a_n}B_n$, and define the sequence of functions $F_n$  as a linear combination of step functions,
$$
F_n(T_zx)=F_n(z,x)=\sumit j1{k_n}F_n^j(z)\cdot\indic{B_n^j}(x),
$$ 
where $\bset{F_n^j}_{j=1}^{k_n}$   {are entire}, and $\bset{B_n^j}_{j=1}^{k_{n-1}}$ is a measurable partition of $B_n$, denoted $\mathcal P_n$. Note that $F_n$ is well defined, since $B_n$ is an $S_{a_n}$-set and therefore the mapping $T_zx\mapsto (z,x)$ is well defined, as mentioned in Section 2.2.

Formally, we will construct this sequence inductively. Define $F_1:X_1\rightarrow\C$ by
$$
F_1(T_zx)=F_1(x,z)=z.
$$
$F_1$ is measurable, since it is constant with respect to one variable, and continuous with respect to the other. By the way $F_1$ is defined for every $x\in B_1$ and $\abs z<\frac14$
$$
\abs{F_1(T_zx)}=\abs{z}<\frac14,
$$
and so $\bset{F_1\le\frac14}\supset\frac14\D B_1$. Following Remark \ref{rmk:JonsObservation}:
 {\begin{eqnarray*}
\mu\bb{\bset{x\in X_1,\;\abs {F_1(x)}\le\frac14}}&\ge&\mu\bb{\frac14\D B_1}=m\bb{\frac14\D}\cdot\frac{\mu\bb {S_{a_1}B_1}}{m\bb{S_{a_1}}}\\
&\overset{\text{as }\atop a_1=1}=&\frac{\pi}{4^3}\cdot\mu\bb{S_{a_1}B_1}>\frac{\pi}{4^3}\cdot\frac{199}{200}>\frac1{25}.
\end{eqnarray*}}
A similar computation shows that $\mu\bb{\bset{\abs {F_1}\ge\frac34}}>\frac1{25}$ as well, and so property (i) holds.\\

Assume that $F_{n-1}: X_{n-1}\rightarrow\C$ was defined as
$$
F_{n-1}(T_zx)=F_{n-1}(x,z)=\sumit j 1 {k_{n-1}}F_{n-1}^j(z)\indic{B_{n-1}^j}(x),
$$
and that property (ii) holds for $F_{n-1}$. We assume in addition that instead of property $(B_4)$ we have property $(B_4')$: for the same parameter $B$
$$
\underset{z\in S_{a_m}}\max\;\abs{F_n(T_zx)}\le\exp\bb{2^{1-B}\mathcal M_B(m+1)}+\sumit j 1 n 10^{-2j}.
$$
Naturally, property $(B_4')$ implies property $(B_4)$. Moreover, we assume that for every $1\le j\le k_{n-1}$
$$
\underset{z\in S_{a_{n-1}}}\max\abs{F_{n-1}^j(z)}\le \exp\bb{2^{1-B}\mathcal M_B(n)}.
$$
We refer to this property as property $(B_5)$, and regard it as part of property $(ii')$, which is property $(ii)$ where $(B_4)$ is replaced by $(B_4')$ and $(B_5)$ is added.\\
Since $F_{n-1}^j$ is entire for every $j$ fixed, it is uniformly continuous on $S_{a_{n-1}}^{+1}$, and therefore there exists $\delta_n\in(0,1)$ such that for every $1\le j\le k_{n-1}$:
$$
\underset{z,w\in S_{a_{n-1}}^{+1}\atop\abs{z-w}<\delta_n}\sup\; \abs{F_{n-1}^j(z)-F_{n-1}^j(w)}<\frac{10^{-2n}}2.
$$
Let $\mathcal P_n$ be a partition of $B_n$ which is $\delta_n$-fine and consistent with $\mathcal P_{n-1}=\bset{B_{n-1}^\ell}_{\ell=1}^{k_{n-1}}$, the partition of $B_{n-1}$ used to define $F_{n-1}$. Such a partition exists by Lemma \ref{lem:Partition}. \\
For every $j$ we use the axiom of choice to choose a representative $x_n^j\in B_n^j$. We will define the function $F_n^j$ by using Lemma \ref{lem:EntireExtension} with the parameters:
\begin{eqnarray*}
&&\Lambda_n^j=\bset{\frac\lambda{a_{n-1}},\; \lambda\in S_{a_n}\text{ so that } T_\lambda x_n^j\in B_{n-1}} ,\; C=\frac{a_n}{a_{n-1}}=  D\cdot n\log^2n,\\
&&\;\mathcal M:=\exp\bb{2^{1-B}\mathcal M_B(n)},\; f_\lambda(z):=F_{n-1}(T_{a_{n-1} \bb{\lambda+z}}\; x_n^j):S_1\rightarrow \C.
\end{eqnarray*}
Let us verify these parameters satisfy the requirements of the lemma. First of all, because $B_{n-1}$ is an $S_{a_{n-1}}$ set, then for every $\lambda\neq\mu\in\Lambda_n^j$, we have that 
$$
S_{a_{n-1}}(a_{n-1}\cdot \lambda)\cap S_{a_{n-1}}(a_{n-1}\cdot \mu)=\emptyset\iff S_1(\lambda)\cap S_1(\mu)=\emptyset.
$$
In particular, for every $\lambda\neq\mu\in\Lambda_n^j$, we have $\norm{\lambda-\mu}{\infty}>2$.\\
Next, for every $n\ge 1$ for every $D$ large enough
\begin{eqnarray*}
\mathcal M&=&\exp\bb{2^{1-B}\mathcal M_B(n)}=\exp\bb{2^{1-B}\exp\bb{B\cdot n+  {D^2}\pi\sumit j 2 {n-1}j^2\log^4j}}\\
&\ge&\exp\bb{2^{1-B}\exp\bb{B \cdot n+  D^2\pi\cdot n\log^2n}}.
\end{eqnarray*}
In fact a more accurate lower bound is
$$
\exp\bb{2^{1-B}\exp\bb{B \cdot n+  D^2\pi\cdot n^2\log^4n}},
$$
but the bound indicated above is enough for our use. 
In particular, for every constant $B$ there exists $D$ large enough so that
$$
\mathcal M\ge 40\log\bb{D\cdot n\log^2n}.
$$
We conclude that all the requirements of Lemma \ref{lem:EntireExtension} are satisfied.

Define the function
\begin{eqnarray}\label{eq:defOfFn}
F_n(T_zx)=F_n(z,x)=\sumit j 1 {k_n}F_n^j(z)\cdot\indic {B_n^j}(x).
\end{eqnarray}
Note that every summand in the sum is a measurable function as it is an indicator function of the measurable set $B_n^j$ in one variable and the continuous function $F_n^j$ in the other. This implies that $F_n$ is a measurable function since the number of sets in the partition (and therefore the number of summands in the sum) is finite.\\
To conclude the proof it is left to show that property (ii') holds for $F_n$ as well.\\
Fix $1\le j\le n_{k-1}$ and $\lambda\in \Lambda_n^j$, and recall the definition of the sets $A_\lambda$ in Lemma \ref{lem:EntireExtension}:
$$
A_\lambda=S_1(\lambda)\cap\bset{z,\; \abs{f(z)-f_\lambda(z-\lambda)}<\exp\bb{-\frac{\mathcal M}4}},
$$
where for us $f=F_n^j$ for some $1\le j\le n_{k-1}$. Let $D_\lambda=a_{n-1}\cdot A_\lambda$, and define the set
$$
G_n:=\bunion j 1 {k_{n-1}}\bb{\underset{\lambda\in\Lambda_n^j}\bigcup D_\lambda^{-1-a_{n-2}}}B_n^j\subseteq X_n.
$$
We will first show that property (A) holds. By the way the partition $\mathcal P_n$ was defined, 
\begin{eqnarray}\label{eq:inclusion}
\bunion j 1 {k_n}\bb{\underset{\lambda\in\Lambda_n^j}\bigcup  S_{a_{n-1}}^{-\delta_n}(a_{n-1}\cdot\lambda)}B_n^j\subseteq X_{n-1}\subseteq\bunion j 1 {k_n}\bb{\underset{\lambda\in\Lambda_n^j}\bigcup  S_{a_{n-1}}^{+\delta_n}(a_{n-1}\cdot\lambda)}B_n^j
\end{eqnarray}
(see Figure \ref{label:distortion}).
\begin{figure}[!ht]
\centering
  \begin{center}
    \includegraphics[width=0.8\textwidth]{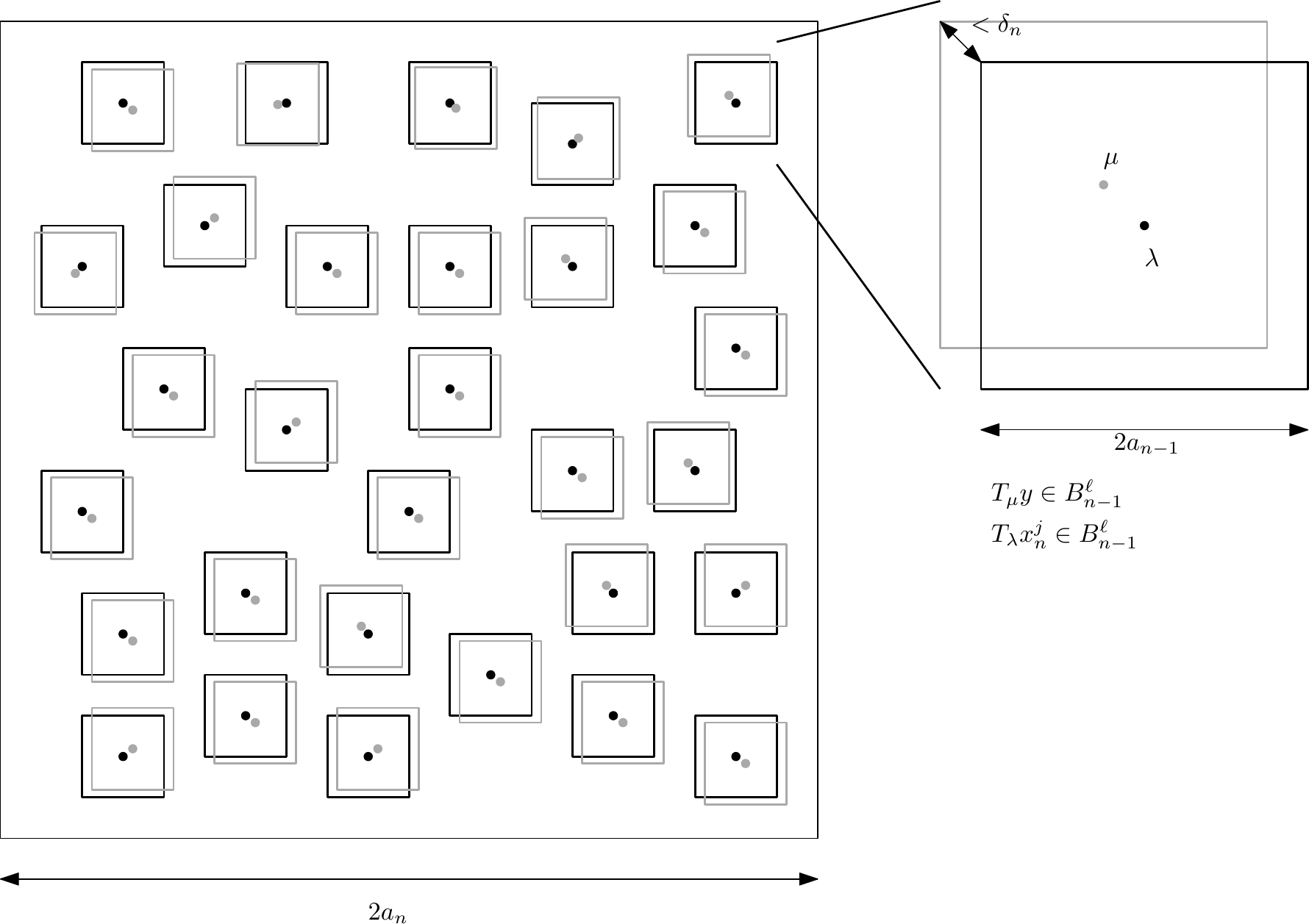}
  \end{center}
  \caption{ Fix $n$ and $1\le j\le k_n$. The black points represent the set $\Lambda_n^j$. For every $y\in B_n^j$, $y\neq x_n^j$, the gray configuration of squares represent the case where $y$ was chosen to be the representative of $B_n^j$ instead of $x_n^j$. Thus, the gray configuration of squares is a distortion of the black configuration of squares by at most $\delta_n$, by the way the partition $\mathcal P_n$ was defined.}
  \label{label:distortion}
  \end{figure}

We have
\begin{eqnarray*}
\mu\bb{X_n\setminus G_n}&\le&\mu\bb{X_n\setminus\bunion j 1 {k_n}\bb{\underset{\lambda\in\Lambda_n^j}\bigcup  S_{a_{n-1}}(a_{n-1}\cdot\lambda)}B_n^j}\\
&+&\mu\bb{\bunion j 1 {k_n}\bb{\underset{\lambda\in\Lambda_n^j}\bigcup \bb{S_{a_{n-1}}(a_{n-1}\cdot\lambda)\setminus D_\lambda^{-1-a_{n-2}}}}B_n^j}.\\
\end{eqnarray*}
We will use Remark \ref{rmk:JonsObservation} to bound each of these terms:\\
Remember that following Lemma \ref{lem:EntireExtension}, $m\bb{S_1(\lambda)\setminus A_\lambda^{-\eps}}=O\bb{\frac1C+\eps}$. We obtain that for every $\lambda\in\Lambda_n^j$, 
$$
m\bb{S_{a_{n-1}}\setminus \bb{a_{n-1}\cdot A_\lambda}^{-1-a_{n-2}}}\le O(1)\cdot a_{n-1}^2\bb{\frac{a_{n-1}}{a_n}+\frac{1+a_{n-2}}{a_{n-1}}}.
$$
Define $S_j:=\bb{\underset{\lambda\in\Lambda_n^j}\bigcup \bb{S_{a_{n-1}}(a_{n-1}\cdot\lambda)\setminus D_\lambda^{-1-a_{n-2}}}}$. Then since $B_n^j$ is an $S_{a_n}$-set,
\begin{eqnarray*}
\mu\bb{\bunion j 1 {k_n}S_jB_n^j}&\overset{Rmk.\atop \ref{rmk:JonsObservation}}=&\sumit  j 1 {k_n}m\bb{S_j}\cdot\frac{\mu\bb{S_{a_n}B_n^j}}{m(S_{a_n})}\\
&\le&\sumit  j 1 {k_n}\underset{\lambda\in\Lambda_n^j}\sum\; m\bb{S_{a_{n-1}}(a_{n-1}\cdot\lambda)\setminus \bb{a_{n-1}\cdot A_\lambda}^{-1-a_{n-2}}}\cdot\frac{\mu\bb{S_{a_n}B_n^j}}{m(S_{a_n})}\\
&\le&\sumit  j 1 {k_n}\#\Lambda_n^\cdot O(1)\cdot \frac{a_{n-1}^2\bb{\frac{a_{n-1}}{a_n}+\frac{1+a_{n-2}}{a_{n-1}}}}{4a_n^2}\cdot\frac{\mu\bb{S_{a_n}B_n^j}}{m(S_{a_n})}\\
&\le&\frac{a_n^2}{a_{n-1}^2}\cdot O(1)\cdot \frac{a_{n-1}^2\bb{\frac{a_{n-1}}{a_n}+\frac{1+a_{n-2}}{a_{n-1}}}}{4a_n^2}\sumit  j 1 {k_n}\mu\bb{S_{a_n}B_n^j}\le O\bb{\frac{a_{n-2}}{a_{n-1}}}.
\end{eqnarray*}
Similarly,
\begin{eqnarray*}
&&\mu\bb{X_n\setminus\bunion j 1 {k_n}\bb{\underset{\lambda\in\Lambda_n^j}\bigcup  S_{a_{n-1}}(a_{n-1}\cdot \lambda)}B_n^j}\\
&\le& \mu\bb{X_n\setminus\bunion j 1 {k_n}\bb{\underset{\lambda\in\Lambda_n^j}\bigcup  S_{a_{n-1}}^{+1}(a_{n-1}\cdot\lambda)}B_n^j}+\mu\bb{\bunion j 1 {k_n}\bb{\underset{\lambda\in\Lambda_n^j}\bigcup \bb{S_{a_{n-1}}^{+1}\setminus S_{a_{n-1}}}(a_{n-1}\cdot \lambda)}B_n^j}\\
&\overset{(\ref{eq:inclusion})}\le&\mu\bb{X_n\setminus X_{n-1}}+\sumit j 1 {k_n}\#\Lambda_n^j \cdot \frac{2\cdot a_{n-1}}{a_n^2}\cdot\mu(S_{a_n}B_n^j)<\mu\bb{X_n\setminus X_{n-1}}+\frac{2}{a_{n-1}}.
\end{eqnarray*}
Overall, we get that:
$$
\mu\bb{X_n\setminus G_n}\le \mu\bb{X_n\setminus X_{n-1}}+O\bb{\frac{a_{n-2}}{a_{n-1}}}=\mu\bb{X_n\setminus X_{n-1}}+\frac1D\cdot O\bb{\frac1{n\log^2n}},
$$
concluding the proof.

Next, we will show that for every $x\in G_n$ the properties enumerated as $(B)$ hold. Fix $x\in G_n$. There exists $1\le j\le k_n$, and $\lambda\in \Lambda_n^j$, such that $x\in D_\lambda^{-1-a_{n-2}}B_n^j$.~\\

\underline{Property $(B_1)$ holds:} Note that $T_\lambda x_n^j\in B_{n-1}$ by the way $\Lambda_n^j$ was defined. Similarly, for every $y\in B_n^j$ there exists $\mu\in\C$ such that $\abs{\mu-\lambda}<\frac{\delta_n}{a_{n-1}}<\frac1{a_{n-1}}$ and $T_{a_{n-1}\cdot\mu} y\in B_{n-1}$. Let $y\in B_n^j$ be so that $x=T_wy$ for $w\in D_\lambda^{-1-a_{n-2}}$. Then for every $z\in S_{a_{n-2}}$ we have that $T_zx=T_{z+w}y\in D_\lambda^{-1}y\subset D_\mu^{-1+\delta_n}y\subset D_\mu y\subset X_{n-1}$, as needed.\\

\underline{Property $(B_2)$ holds:} Since $F_n^j$ is  {entire}, we get that $z\mapsto F_n(T_zx)$ is  {entire for every $x\in G_n$}.\\

\underline{Property $(B_3)$ holds:} We want to show that for every $z\in S_{a_{n-2}}$,
$$
\abs{F_n(T_zx)-F_{n-1}(T_zx)}<10^{-2n}.
$$
The idea is that $\delta_n$ was chosen so that for every $j$ if the function $F_{n-1}^j$ is perturbed by something smaller than $\delta_n$, then its image is perturbed by something which is bounded by $\frac{10^{-2n}}2$. Next, we take a partition which is $\delta_n$-fine partition consistent with $\mathcal P_{n-1}$, which means that for every $y\in B_n^j$, the configuration of squares associated with it is at most a $\delta_n$-distortion of the configuration of squares associated with $x_n^j$, meaning $\bset{f_\lambda}$ used to construct $F_n$ differ by at most $\frac{10^{-2n}}2$ from the ones used if $y$ was the chosen representative. Combining this with the fact $F_n$ approximates these $f_\lambda$ to begin with, we get that $\abs{F_n-F_{n-1}}$ is small.

Formally, let $x_0\in B_{n-1}^\ell\cap S_{a_n}B_n^j$, then there exist $\lambda\in\Lambda_n^j$ and $w\in S_{a_n}$ such that $x_0=T_wy,\; y\in B_n^j$, and $\abs{\lambda-w}<\delta_n$. By the way $F_n$ is constructed, for every $z\in D_\lambda^{-\delta_n}$:
\begin{eqnarray*}
\abs{F_{n-1}(T_zx_0)-F_n(T_zx_0)}&=&\abs{F_{n-1}^\ell(z)-F_n(T_{z+w}T_{-w}x_0)}=\abs{F_{n-1}^\ell(z)-F_n(T_{z+w}y)}\nonumber\\
&\overset{\text{by }(\ref{eq:defOfFn})}=&\abs{F_{n-1}^\ell(z)-F_n^j(z+w)}\\
&\le&\abs{F_{n-1}^\ell(z)-F_{n-1}^\ell(z+w-\lambda)}+\abs{F_{n-1}^\ell(z+w-\lambda)-F_n^j(z+w)}.\nonumber
\end{eqnarray*}
Now, by property $(E_1)$ of $F_n^j$, guaranteed by Lemma \ref{lem:EntireExtension}, we know that since $\abs{\lambda-w}<\delta_n$, then $z+w\in D_\lambda$ and so-
$$
\abs{F_{n-1}^\ell(z+w-\lambda)-F_n^j(z+w)}<\exp\bb{-\frac{\mathcal M_B(n)}4}<\frac{10^{-2n}}2.
$$
On the other hand, since $\abs{\lambda-w}<\delta_n$,
\begin{eqnarray*}
\abs{F_{n-1}^\ell(z)-F_{n-1}^\ell(z+w-\lambda)}\le\underset{\zeta,\xi\in S_{a_{n-1}}^{+1}\atop\abs{\zeta-\xi}<\delta_n}\sup\;\abs{F_{n-1}^\ell(\zeta)-F_{n-1}^\ell(\xi)}<\frac{10^{-2n}}2
\end{eqnarray*}
as well. Overall, for every $x_0\in B_{n-1}\cap S_{a_n}B_n^j$ we have that
\begin{eqnarray}\label{Eq:consistency}
\abs{F_n(T_zx_0)-F_{n-1}(T_zx_0)}<10^{-2n}.
\end{eqnarray}
Next, since $x\in D_\lambda^{-1-a_{n-2}}B_n^j$ there exists $w\in D_\lambda^{-1-a_{n-2}}$ such that $T_{-w}x\in B_n^j$. In addition, since the partition $\mathcal P_n$ is $\delta_n$-fine there exists $\zeta$ such that $\abs{\zeta-\lambda}<\delta_n$, and $T_{-w-\zeta}x\in B_{n-1}^\ell$. For every $z\in S_{a_{n-2}}$ 
$$
z+w\in D_\lambda^{-1}\Rightarrow z+w-\zeta+\lambda\in D_\lambda^{-1+\delta_n}\subset D_\lambda^{-\delta_n},
$$
and by using (\ref{Eq:consistency}) we get that if $x_0=T_{-w-\zeta}x\in B_{n-1}^\ell\cap S_{a_n}B_n^j$, then
\begin{eqnarray*}
\abs{F_n(T_zx)-F_{n-1}(T_zx)}&=&\abs{F_n(T_{z+w+\zeta}T_{-w-\zeta}x)-F_{n-1}(T_{z+w+\zeta}T_{-w-\zeta}x)}\\
&=&\abs{F_n(T_{z+w+\zeta}x_0)-F_{n-1}(T_{z+w+\zeta}x_0)}<10^{-2n},
\end{eqnarray*}
and property $(B_3)$ holds.\\

\underline{Property $(B_4')$ holds:} Note that for every $m\le n-3$ we have that by property $(B_3)$, and the induction assumption (which holds only for $m\le n-3$),
$$
\underset{z\in S_{a_{m}}}\max\;\abs{F_n(T_zx)}\le\underset{z\in S_{a_{m}}}\max\;\abs {F_{n-1}(T_zx)}+10^{-2n}\le \exp\bb{2^{1-B}\mathcal M_B(m+1)}+\sumit j 1 n 10^{-2j}.
$$
For $m=n-2$, by property $(B_3)$,
$$
\underset{z\in S_{a_{n-2}}}\max\;\abs{F_n(T_zx)}\le\underset{z\in S_{a_{n-1}}}\max\;\abs{F_{n-1}(T_zx)}+10^{-2n}\le \exp\bb{2^{1-B}\mathcal M_B(n)}+10^{-2n}.
$$

\underline{Property $(B_5)$ holds:} By property $(E_2)$ of Lemma \ref{lem:EntireExtension} for every $1\le j\le k_n$:
\begin{eqnarray*}
\underset{z\in S_{a_n}}\max\abs{F_n^j(z)}&\le& \exp\bb{2^{1-B}\underset{1\le\ell\le k_{n-1}}\max\underset{z\in S_{a_{n-1}}}\max\abs{F_{n-1}^\ell(z)}\cdot \exp\bb{\pi C^2}}\\
&\le&\exp\bb{2^{1-B}\mathcal M_B(n)\exp\bb{\pi\cdot n^2\log^4n}}\le\exp\bb{2^{1-B}\mathcal M_B(n+1)}.
\end{eqnarray*}

This concludes the proof of the lemma.
\end{proof}
\section{The Proof of Theorem \ref{thm:UprLwrBnd}}
Let $\bset{F_n}$ be the sequence constructed in Lemma \ref{lem:SuperSequence}.

\subsection{The sequence $\bset{F_n}$ converges almost surely to a measurably entire function:}

Let $x\in\bunion m 1 \infty\bintersect k m \infty G_k$, then by property $(B_3)$, $\bset{F_n(T_zx)}$ converges locally uniformly to an entire function, and in particular if $\bset {F_n}$ converges almost surely, then it converges to a measurably entire function. It is therefore enough to show that $\mu\bb{\bunion m 1 \infty\bintersect k m \infty G_k}=1$ to conclude the proof. To see that $\mu\bb{\bunion m 1 \infty\bintersect  k m\infty G_k}=1$ we will show that 
$$
0=\mu\bb{\bb{\bunion m 1 \infty\bintersect  km \infty G_k}^c}=\mu\bb{\bintersect m 1 \infty\bunion  km \infty G_k^c}=\limit m\infty \mu\bb{\bunion  km \infty G_k^c}.
$$
By using property $(A)$ of the sequence $\bset{F_n}$ and the fact that $\bset{X_n}$ is increasing, we obtain that
\begin{eqnarray*}
\mu\bb{\bunion  km \infty G_k^c}&\overset{G_k\subseteq X_k}=&\mu\bb{\bunion km \infty \bb{X\setminus X_k}\uplus\bb{X_k\setminus G_k}}\le\mu\bb{\bunion km \infty \bb{X\setminus X_k}}+\sumit k m \infty\mu\bb{X_k\setminus G_k}\\
&\overset{\text{Property (A)}}\le&\mu\bb{X\setminus X_m}+\sumit  km \infty\bb{\mu\bb{X_k\setminus X_{k-1}}+\frac1D\cdot O\bb{\frac1{k\log^2k}}}\\
&\overset{X_{n-1}\subseteq X_n}\le&2\mu\bb{X\setminus X_{m-1}}+\frac{O(1)}D\sumit k m \infty \frac1{k\log^2k}.
\end{eqnarray*}
To conclude the proof, note that the latter tends to zero as $m$ tends to $\infty$, since the series converges.

\subsection{The limiting function $F$ is not constant:} 
Since the sequence $\bset{F_n}$ converges in measure to a function which we shall denote by $F$,
\begin{eqnarray*}
\mu\bb{\bset{\abs F\le\frac13}}=\limit n\infty\mu\bb{\bset{\abs {F_n}\le\frac13}}\;\;,\;\;\mu\bb{\bset{\abs F\ge\frac23}}=\limit n\infty\mu\bb{\bset{\abs {F_n}\ge\frac23}}.
\end{eqnarray*}
We will bound each of the quantities above from bellow by a uniform constant for every n.\\
\begin{eqnarray*}
\mu\bb{\bset{\abs {F_n}\le\frac13}}&\overset{\text{property}\atop(B_3)}\ge&\mu\bb{\bset{\abs {F_{n-1}}\le\frac13-10^{-2n}}\setminus G_n^c}\\
&\overset{\text{property}\atop(B_3)}\ge&\cdots\overset{\text{property}\atop(B_3)}\ge\mu\bb{\bset{\abs {F_1}\le\frac13-\sumit m 1 n10^{-2m}}\setminus\bunion m 1 n G_m^c}\\
&\ge&\mu\bb{\bset{\abs {F_1}\le\frac14}}-\mu\bb{\bunion m 1 n G_m^c}\\
&\overset{(A)}\ge&\frac1{25}-2\mu\bb{X\setminus X_1}-\frac{O(1)}{ D}\sumit n 2 \infty \frac1{n\log^2n}=\frac3{100}-\frac{O(1)}{D}\sumit n 2 \infty \frac1{n\log^2n},
\end{eqnarray*}
where $  D$ is the constant from the definition of the sequence $\bset{a_n}$. If we choose $  D$ large enough we get
$$
\mu\bb{\bset{\abs {F_n}\le\frac13}}\ge\frac1{100}.
$$
A similar computation shows that $\mu\bb{\bset{\abs F\ge\frac23}}$ is greater than the same constant, concluding that $F$ is not constant.

\subsection{Upper bound for the growth rate of the function:}

Let $x\in \bintersect kn\infty G_k$, we will show that (\ref{eq:UprBnd}) holds. For every $k\ge n$ by property $(B_4)$ of the sequence $\bset{F_n}$:
\begin{eqnarray*}
\underset{z\in S_{a_m}}\max\;\abs{F(T_zx)}&\le& 2\exp\bb{2^{1-B}\mathcal M_B(m+1)}.
\end{eqnarray*}
In addition, as $\frac{a_{m+1}}{a_m}\sim m\log^2 m$, for every $\eps>0$ for every $m>m_\eps$ large enough
\begin{eqnarray*}
\mathcal M_B(m)&=&\exp\bb{B\cdot m+\pi\sumit k 2{m}\bb{\frac{a_{k}}{a_{k-1}}}^2}\le\exp\bb{B\cdot m+O(1)\cdot D\sumit k 2{m}k^2\log^4k}\\
&\le&\exp\bb{B\cdot m+O(1)\cdot D m^3\log^4m}\le\exp\bb{O(1)\cdot\log^{3+\frac\eps2} a_m}.
\end{eqnarray*}
We conclude that for every $\eps>0$
$$
\frac{\log\log\underset{z\in S_{a_m}}\max\;\abs {F(T_zx)}}{\log^{3+\eps} a_m}\le\frac{O(1)}{\log^{\frac\eps2}a_m}\underset{m\rightarrow\infty}\longrightarrow0.
$$
For every $R$ large enough, let $m$ be such that $a_m\le R<a_{m+1}$. Using the estimate above with $m+1$ and $\frac\eps2$ instead of $\eps$ we get,
$$
\frac{\log\log\underset{z\in S_R}\max\;\abs {F(T_zx)}}{\log^{3+\eps} R}\le\frac{\log\log\underset{z\in S_{a_{m+1}}}\max\;\abs {F(T_zx)}}{\log^{3+\eps} a_m}\le \frac{O(1)}{\log^{\frac\eps2}a_m}\cdot\bb{\frac{\log\bb{a_{m+1}}}{\log \bb{a_m}}}^{3+\frac\eps2}\underset{m\rightarrow\infty}\longrightarrow0.
$$
concluding the proof of the theorem.
\section{Appendix}
 For completeness we introduce here a proof of the Nested Towers Lemma:\\
\textbf{ Lemma \ref{lem:Nested}. }{\em [The Nested Towers Lemma]
Let $(X,\mathcal B,\mu)$ be a standard probability space, and suppose $T:\C\rightarrow PPT(X)$ is a free $\C$-action. Let $\bset{a_n}_{n=1}^\infty$ be an increasing sequence of positive numbers such that $\sumit n 1 \infty \frac{a_n}{a_{n+1}}<\frac12$. Then there exists a sequence of sets $\bset{B_n}\subset \mathcal B$ such that}
\begin{enumerate}[label=($N_\arabic*$),leftmargin=0.7cm]
\item $B_n$ is an $S_{a_n}$-set.
\item $S_{a_n}B_n\subset S_{a_{n+1}}B_{n+1}$.
\item $\mu\bb{S_{a_n}B_n}\nearrow1$ as $n\rightarrow\infty$.
\end{enumerate}
We open this appendix with a discussion of a preliminary lemma. This lemma is a version of Rokhlin's lemma for flows. It was proven by Lind in \cite{Lind1975}: 

\begin{lem}\label{lem:Lind}
Let $T$ be a free $n$-dimensional flow on a standard probability space $(X,\mathcal B,\mu)$. Then for any rectangle $Q\subset\R^n$ and $\eps> 0$, there exists a $Q$-set, $F\subset X$ such that $\mu\bb{T_QF} > 1 - \eps$.
\end{lem}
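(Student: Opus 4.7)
The statement is a continuous-parameter analogue of Rokhlin's lemma; my approach reduces it to the classical $\Z^n$ Rokhlin lemma via a cross-section / thickening construction, absorbing boundary effects by a preliminary enlargement of $Q$.

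\emph{Enlargement.} Fix a small $\eta=\eta(\eps)>0$ and an inflated rectangle $Q^+\supset Q$ with $|Q|/|Q^+|>1-\eta$. It suffices to build a $Q^+$-set $F$ with $\mu(T_{Q^+}F)>1-\eta$: then $F$ is automatically a $Q$-set (disjointness of translates by $Q^+$ implies disjointness of translates by the subset $Q$), and by Remark~\ref{rmk:JonsObservation}
$$
\mu(T_Q F)\;=\;\frac{|Q|}{|Q^+|}\,\mu(T_{Q^+}F)\;>\;(1-\eta)^2\;>\;1-\eps.
$$

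\emph{Cross-section, discretization, and thickening.} Pick $h>0$ small enough that $Q^+$ is essentially tiled by $N\approx|Q^+|/h^n$ translates of the small cube $C_h=[0,h]^n$ along a sublattice $V\subset h\Z^n$ (the uncovered boundary of $Q^+$ having volume $O(h|\partial Q^+|)$, which we absorb into the previous enlargement). Joint continuity of the flow together with freeness makes $(z,x)\mapsto T_zx$ locally injective at every $x$; exhausting $X$ by compacta (afforded by the standard-probability-space structure) on which the flow is uniformly continuous, and applying a Baire-type selection, yields a measurable $C_h$-set $X_0\subset X$ of positive measure --- i.e.\ the map $C_h\times X_0\to X$ is injective and jointly measurable. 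This is the continuous-parameter analogue of a Poincar\'e cross-section. Now apply the classical Rokhlin lemma for free $\Z^n$-actions on a standard probability space to the induced $V$-action on $X_0$ (by first-return times under the $C_h$-flow), producing $B\subset X_0$ with $\{T_v B\}_{v\in V}$ pairwise disjoint and $\mu(\bigsqcup_{v\in V}T_v B)>1-\eta/2$. Setting $F:=B$, the $C_h$-cross-section property of $X_0$ guarantees that thickening by $C_h$ is honest, so $T_{Q^+}F\supset\bigsqcup_{v\in V}T_{v+C_h}B$, which has $\mu$-measure $>1-\eta$ after accounting for the negligible boundary already handled by the enlargement.

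\textbf{Main obstacle.} The delicate step is producing the measurable $C_h$-cross-section $X_0$ of positive measure. Freeness and joint continuity of $T$ give only pointwise local injectivity of $z\mapsto T_zx$, and manufacturing a single globally-defined measurable cross-section demands a careful exhaustion of $X$ by compacta on which the flow is uniformly continuous (leveraging the compact-sets-of-large-measure assumption in the definition of a standard probability space) together with a Baire-category selection. Once the cross-section is in hand, the remainder of the proof reduces to the classical discrete Rokhlin lemma combined with Remark~\ref{rmk:JonsObservation}-style measure accounting for the thickened tower.
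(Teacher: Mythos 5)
The paper does not prove Lemma~\ref{lem:Lind} at all: it is stated as a cited result of Lind~\cite{Lind1975}, and the only accompanying text is a remark observing that the set Lind constructs automatically satisfies the measurability condition $(F_2)$ (in fact, a stronger one). So there is no ``paper's own proof'' to match; you should be comparing against Lind's paper, not this one.

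As a blind attempt, your sketch has the right intuition (section, discretize, apply discrete Rokhlin, thicken), but several steps do not follow from the hypotheses and would require the full force of results at the level of the one being proved. First, the $\C$-action is assumed continuous only as a homomorphism $\C\to PPT(X)$ in the weak topology pulled back from the Koopman representation; nothing in the setup makes $z\mapsto T_zx$ continuous on $(X,\tau)$ for individual $x$, so ``joint continuity of the flow'' and hence ``local injectivity at every $x$'' are unsupported. Producing a measurable positive-measure cross-section for a free flow is essentially the Ambrose--Kakutani theorem (and its $\R^n$ refinements due to Rudolph, Feldman, Lind himself, etc.), not a Baire-category exercise on compacta of large measure. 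Second, even granting the cross-section $X_0$, the first-return structure under the flow gives a $\Z^n$-valued \emph{cocycle} with point-dependent return times, not a free $\Z^n$-action by a fixed set of commuting transformations; the classical discrete Rokhlin lemma therefore does not apply to $X_0$ as you invoke it. Third, the inclusion $T_{Q^+}F\supset\bigsqcup_{v\in V}T_{v+C_h}B$ needs the return times to be constant (equal to $v$) across the discrete tower, which is exactly what fails for a general cross-section. In short, the ``main obstacle'' you flag is not a loose end but the entire substance of the lemma, and the reduction you propose around it also breaks at the induced-action step.
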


  {\begin{rmk}
Note that unlike our definition of an $S$-set, Lind's definition does not require measurability. Namely, his definition lacks condition $(F_2)$ completely. Nevertheless, he proved that the set $F$ found in Lemma \ref{lem:Lind}, not only satisfies condition $(F_2)$, but in fact fulfills a stronger condition than the one we impose. For more information see ~\cite[p. 177]{Lind1975}.
\end{rmk}}

We first describe the idea of the proof. We start with a sequence of sets $\bset{B_n(0)}$ obtained by Rokhlin's lemma for a sequence $\bset{\eps_n}$, and define for every $n$ a sequence of sets $\bset{B_n(k)}_{k=0}^\infty$: $B_n(k+1)$ will only include elements of $B_n(k)$ so that their restricted orbit, $S_{a_n}x$, is included in $S_{a_{n+1}}B_{n+1}(k)$. Then we will bound the measure of the sets that we remove to conclude that for $B_n:=\bintersect k 0\infty B_n(k)$ we have $S_{a_n}B_n\nearrow X$ as $n\rightarrow\infty$.
\begin{proof}
Let $\bset{\eps_n}$ be a positive monotone decreasing sequence such that $\sumit n 1 \infty\eps_n<\infty$. By Rokhlin's Lemma for flows, Lemma \ref{lem:Lind},  for every $n$ there exists a set $B_n(0)$ such that
\begin{enumerate}[label=($L_\arabic*$),leftmargin=1cm]
\item $B_n(0)$ is an $S_{a_n}$-set.
\item $\mu\bb{S_{a_n}B_n(0)}>1-\eps_n$.
\end{enumerate}
We inductively define the sets:
$$
B_j(k+1)=\bset{x\in B_j(k),\; S_{a_j}x\subset S_{a_{j+1}}B_{j+1}(k)}.
$$
First of all, $B_j(k+1)\subseteq B_j(k)$ and so the set $B_j:=\bintersect k 1 \infty B_j(k)$ is well defined. Next, every measurable subset of $B_j(0)$, is in itself an $S_{a_j}$-set, because of property $(F_2)$ of an $S_{a_j}$-set. If for every $k$, $B_j(k)$ is measurable, then it is an $S_{a_j}$-set, and so is $B_j$. To conclude that property $(N_1)$ holds it is left to show that for every $k$ the set $B_j(k)$ is measurable.

It is clear that the inclusion condition, condition $(N_2)$, holds by the way the sequence $\bset{B_j}$ is defined. To prove that $(N_1),\;(N_3)$ hold we will need the following claim:

\underline{Claim:} Let $x\in B_j(k)$, then $x\in B_j(k+1)$ if and only if there exists $y\in B_{j+1}(k)$ such that $S_{a_j}x\subset S_{a_{j+1}}y$.\\

This claim tells us that for every $x$ that we threw away on step $k$ of the construction of the sequence $\bset{B_j(k)}$, $x\in B_j(k)\setminus B_j(k+1)$, its restricted orbit, $S_{a_j}x$, is included in the restricted orbit of some element $y\in B_{j+1}(k-1)$ that we threw away on step $(k-1)$ of the construction of the sequence $\bset{B_{j+1}(k)}$, $y\in B_{j+1}(k-1)\setminus B_{j+1}(k)$.

\underline{Proof of the claim:} The `if' part of the claim is obvious. To prove the other side, assume by contrudiction that the set defined by 
$$
A_x:=\bset{y\in B_{j+1}(k),\; S_{a_j}x\cap S_{a_{j+1}}y\neq\emptyset}
$$
contains at least two elements. Note that $A_x$ may contain at most four elements, for if $\zeta\in S_{a_j}x\cap S_{a_{j+1}}y\neq\emptyset$ then there exists $z\in S_{a_j}$ and $w\in S_{a_{j+1}}$ such that $\zeta=T_zx=T_wy$, meaning that
$$
S_{a_j}x\cap S_{a_{j+1}}y=S_{a_j}T_{w-z}y\cap S_{a_{j+1}}y=\bb{S_{a_{j+1}}\cap S_{a_j}\bb{w-z}}y=\bb{S_{a_{j+1}}\bb{z-w}\cap S_{a_j}}x.
$$
In particular, there exists a rectangle $R=S_{a_{j+1}}\bb{z-w}\cap S_{a_j}$ such that $Rx=S_{a_j}x\cap S_{a_{j+1}}y$. As $S_{a_j}$ and $S_{a_j}\bb{z-w}$ are aligned squares, their intersection will give us a rectangle. Since $a_j<a_{j+1}$ and the squares $T_zS_{a_j}$ and $S_{a_{j+1}}$ are aligned, the rectangle $R$ must contain at least one of the corners of $S_{a_j}$ (see Figure \ref{fig:nestedTowersLem}).

\begin{figure}[!ht]
\centering
  \begin{center}
    \includegraphics[width=0.3\textwidth]{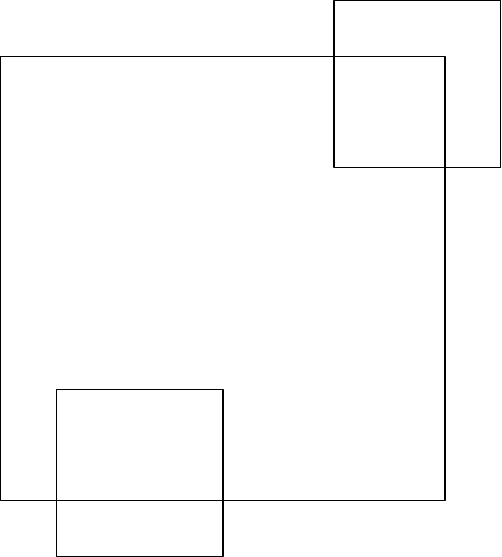}
  \end{center}
  \caption{An intersection of aligned squares is a rectangle, so that at least one of its corners belong to the smaller square in the intersection.}
  \label{fig:nestedTowersLem}
  \end{figure}

In addition, because $B_{j+1}(k)\subset B_{j+1}(0)$ it fulfills property $(F_1)$ of an $S_{a_{j+1}}$-set, and so each element of the set of `corners' of $S_{a_j}x$:
$$
\bset{T_{a_j\bb{1+i}}x,\;T_{a_j\bb{1-i}}x,\;T_{a_j\bb{-1+i}}x,\;T_{a_j\bb{-1-i}}x}
$$
belongs to the set $S_{a_{j+1}}y$ for a unique $y\in A_x$. We conclude that $A_x$ cannot contain more than four elements.\\
Next, note that $R$ is a closed rectangle as an intersection of two closed squares. We get that
$$
S_{a_j}x=\uplus\; R_\alpha x,
$$
where the collection $\bset{R_\alpha}$ contains at most four disjoint closed rectangles, and the union is disjoint since $x\in B_j(k)$ for which property $(F_1)$ of an $S_{a_j}$-set holds. This yields that $S_{a_j}=\uplus\; R_\alpha$, which is a contradiction to the fact that a square is a connected set, and thus concludes the proof of the claim.

We will prove the measurability of $B_j(k)$ by induction on $k$.  Recall that an $S$- set $B\subset X$ satisfies condition $(F_2)$ if for every $B'\subset B$ measurable, and every $A\subset S$ measurable the set $AB':=\underset{z\in A}\bigcup T_zB'$ is a measurable subset of $X$. Now, for every $j$ the set $B_j(0)$ is measurable by property $(F_2)$. Assume that for every $j$ we know that $B_j(k)$ is measurable. Following the claim above and the definition of $B_j(k+1)$, for $x\in B_j(k)$:
\begin{eqnarray*}
x\in B_j(k+1)&\overset{\text{def}}\iff&S_{a_j}x\subset S_{a_{j+1}}B_{j+1}(k)\overset{\text{claim}}\iff\exists y\in B_{j+1}(k), S_{a_j}x\subset S_{a_{j+1}}y\\
&\iff& \exists y\in B_{j+1}(k), x\in S_{a_{j+1}-a_j}y\iff x\in S_{a_{j+1}-a_j}B_{j+1}(k).
\end{eqnarray*}
We conclude that 
$$
B_j(k+1)=B_j(k)\cap S_{a_{j+1}-a_j}B_{j+1}(k),
$$
which is measurable as intersection of two measurable sets, since property $(F_2)$ holds for $B_\ell(k)$ for every $\ell$, by the induction assumption.

It is left to show that this sequence saturates the whole space, namely that $\mu\bb{S_{a_n}B_n}\nearrow1$. Following the claim above if $x\in B_j(k)\setminus B_j(k+1)$, and $y\in B_{j+1}(k-1)$, is such that $T_zx=y$, then necessarily $y\nin B_{j+1}(k)$. We obtain that
\begin{eqnarray}\label{eq:NestedErrors}
S_{a_j}B_j(k)\setminus S_{a_j}B_j(k+1)\subseteq S_{a_{j+1}}B_{j+1}(k-1)\setminus S_{a_{j+1}}B_{j+1}(k).
\end{eqnarray}
In addition, if $S_{a_j}x\cap S_{a_{j+1}-2a_j}B_{j+1}(0)\neq\emptyset$, then there exists $z\in S_{a_j}$ and $w\in S_{a_{j+1}-2a_j}$ such that $T_{z-w}x\in B_{j+1}(0)$, but then for every $\xi\in S_{a_j}$ we have that $\xi+w-z\in S_{a_{j+1}}$ and so:
$$
T_\xi x=T_{\xi+w-z}T_{z-w}x\in S_{a_{j+1}}B_{j+1}(0)\Rightarrow S_{a_j}x\subset S_{a_{j+1}}B_{j+1}(0),
$$
contradicting the fact that $x\nin B_j(1)$. We conclude that
\begin{eqnarray}\label{eq:BndOnErrors}
S_{a_j}B_j(0)\setminus S_{a_j}B_j(1)\subseteq X\setminus S_{a_{j+1}-2a_j}B_{j+1}(0).
\end{eqnarray}

Combining (\ref{eq:NestedErrors}) and (\ref{eq:BndOnErrors}) one can see that:
\begin{eqnarray*}
S_{a_j}B_j(k)\setminus S_{a_j}B_j(k+1)&\overset{(\ref{eq:NestedErrors})}\subseteq& S_{a_{j+1}}B_{j+1}(k-1)\setminus S_{a_{j+1}}B_{j+1}(k)\overset{(\ref{eq:NestedErrors})}\subseteq\cdots\\
&\overset{(\ref{eq:NestedErrors})}\subseteq& S_{a_{j+k}}B_{j+k}(0)\setminus S_{a_{j+k}}B_{j+k}(1)\overset{(\ref{eq:BndOnErrors})}\subseteq X\setminus S_{a_{k+j+1}-2a_{k+j}}B_{k+j+1}(0).\nonumber
\end{eqnarray*}
Now, using remark \ref{rmk:JonsObservation} for every $k$ and $j$ we get that for $m=j+k$,
\begin{eqnarray}\label{eq:measureBnd}
\nonumber\mu\bb{S_{a_j}B_j(k)\setminus S_{a_j}B_j(k+1)}&\le&\mu\bb{X\setminus S_{a_{m+1}-2a_{m}}B_{m+1}(0)}=1-\mu\bb{S_{a_{m+1}-2a_{m}}B_{m+1}(0)}\\
&=&1-m\bb{S_{a_{m+1}-2a_{m}}}\cdot\frac{\mu\bb{S_{a_{m+1}}B_{m+1}(0)}}{m\bb{S_{a_{m+1}}}}\\
\nonumber&\le& 1-\frac{\bb{a_{m+1}-2a_m}^2}{a_{m+1}^2}\bb{1-\eps_m}<2\eps_m+\frac{4a_m}{a_{m+1}}.
\end{eqnarray}
We note that by the triangle inequality:
\begin{eqnarray*}
&&\mu\bb{X\setminus S_{a_j}B_j(n)}\le \mu\bb{X\setminus S_{a_j}B_j(0)}+\sumit k 0 {n-1}\mu\bb{S_{a_j}B_j(k)\setminus S_{a_j}B_j(k+1)}\\
&\overset{(\ref{eq:measureBnd})}\le& \mu\bb{X\setminus S_{a_j}B_j(0)}+\sumit k 1 {n-1}\bb{2\eps_{j+k}+\frac{4a_{j+k}}{a_{j+k+1}}}<2\sumit k j \infty \bb{\eps_k+\frac{2a_k}{a_{k+1}}}.
\end{eqnarray*}
Since the latter is the tail of a converging series, it tends to 0, concluding the proof of $(N_3)$, as $\mu(X\setminus B_n)=\limit k\infty \mu(X\setminus B_n(k))\underset{n\rightarrow\infty}\longrightarrow 0$.
\end{proof}

\bibliographystyle{plain}
 {\bibliography{Weiss_extension}}

\begin{thebibliography}{1}


\bibitem{Us2017}
L~ Buhovsky, A~ Gl\"ucksam, A~ Logunov, M~ Sodin.
\newblock {Translation-invariant probability measures on entire functions}.
\newblock {\em 	arXiv:1703.08101 [math.CV], J. d'Analyse Math., accepted for publication}.

\bibitem{Hormander}
L~ H\"ormander.
Notions of convexity.
Birkh\"auser, Boston, MA, 2007.

\bibitem{Lind1975}
D~Lind.
\newblock {Locally compact measure preserving flows}.
\newblock {\em Advances in Mathematics}, 15::175--193, 1975.

\bibitem{Weiss1996}
B~Weiss.
\newblock {Measurable entire functions}.
\newblock {\em Annals of Numerical Mathematics}, 4:599--605, 1996.

\bibitem{Halmos1960}
Paul~R. Halmos.
\newblock {\em Lectures on ergodic theory}.
\newblock Chelsea Publishing Co., New York, 1960.


\end{thebibliography}
\end{document}